\xpatchcmd{\MaketitleBox}{\hrule}{}{}{}
\xpatchcmd{\MaketitleBox}{\hrule}{}{}{}
\g@addto@macro\normalsize{%
  \setlength\abovedisplayskip{4pt}
  \setlength\belowdisplayskip{4pt}
  \setlength\abovedisplayshortskip{4pt}
  \setlength\belowdisplayshortskip{4pt}
}
\numberwithin{equation}{section}
\crefname{section}{Section}{Sections}
\crefname{subsection}{Subsection}{Subsections}
\crefname{condition}{Condition}{Conditions}
\crefname{hypothesis}{Hypothesis}{Conditions}
\crefname{assumption}{Assumption}{Assumptions}
\crefname{lemma}{Lemma}{Lemmas}
\crefname{definition}{Definition}{Definitions}
\numberwithin{equation}{section}
\newtheorem{theorem} {Theorem}[section]
\newtheorem{lemma}{Lemma}[section]
\newtheorem{example}{Example}[section]
\newtheorem{counter example}{Counter Example}[section]
\newtheorem{remark} {Remark}[section]
\newtheorem{definition} {Definition}[section]
\newtheorem{assumption}{Assumption}[section]
\newenvironment{Definitions}
{%

\begin{enumerate}}%
{\end{enumerate}}
\newenvironment{Assumptions}
{%

\begin{enumerate}}%
{\end{enumerate}}
\def\CC{{\rm \kern.24em \vrule width.02em height1.4ex depth-.05ex \kern-.26emC}}
\def\TagOnRight
\def\AA{{it I} \hskip-3pt{\tt A}}
\def\QQ{\rlap {\raise 0.4ex \hbox{$\scriptscriptstyle |$}} {\hskip -0.1em Q}}
\newcommand{\vo}{\vec{o}\@ifnextchar{^}{\,}{}}
\def\YYint#1#2#3{{\setbox0=\hbox{$#1{#2#3}{\iint}$}
    \vcenter{\hbox{$#2#3$}}\kern-.50\wd0}}
\def\XXint#1#2#3{{\setbox0=\hbox{$#1{#2#3}{\int}$}
    \vcenter{\hbox{$#2#3$}}\kern-.50\wd0}}
\def\namedlabel#1#2{\begingroup
   \def\@currentlabel{#2}%
   \label{#1}\endgroup
}
\newcommand{\rmh}[1]{\mathpalette{\raisem@th{#1}}}
\newcommand{\raisem@th}[3]{\hspace*{-1pt}\raisebox{#1}{$#2#3$}}
\newcommand{\descref}[2]{\hyperref[#1]{\textnormal{\textcolor{black}{(}\textcolor{blue}{\bf #2}\textcolor{black}{)}}}}
\newcommand{\dref}[2]{\hyperref[#1]{\textcolor{black}{(}\textcolor{blue}{\bf #2}\textcolor{black}{)}}}
\newcommand{\be} {\begin{eqnarray}}
\newcommand{\ee} {\end{eqnarray}}
\newcommand{\Bea} {\begin{eqnarray*}}
\newcommand{\Eea} {\end{eqnarray*}}
\newcommand{\de} {\delta}
\newcommand{\La} {\Lambda}
\newcommand{\f}{\infty}
\newcommand{\R}{\mathbb{R}}
\newcommand{\sgn}{\mathop\mathrm{sgn}}
\newcommand{\D}{\Delta}
\def\d{\delta}
\newcommand{\umax}{\overline{u}}
\newcommand{\umin}{\underline{u}}
\newcommand{\Z}{\mathbb{Z}}
\newcommand{\sumj}{\sum_{j \in \Z}}
\newcommand{\BV}{\textrm{BV}}
\newcommand{\mM}{\mathcal{M}}
\newcommand{\jph}{j+1/2}
\newcommand{\jmh}{j-1/2}
\newcommand{\mF}{\mathcal{F}}
\newcommand{\co}{\mathrm{co}}
\newcommand{\barf}{\bar{f}}
\DeclareMathOperator{\TV}{TV}
\newcommand{\norm}[1]{\left|\hspace{-0.2mm}\left| #1 \right|\hspace{-0.2mm}\right|}
\newcommand{\abs}[1]{\left| #1\right|}
\newcommand{\esslim}{\operatorname*{ess\,lim}}
\newcommand{\M}{\mathcal{M}}
\newcommand{\Linf}{L^{\infty}}
\newcommand{\Do}{\Pi_T}
\newcommand{\dx}{\, dx}
\newcommand{\Dt}{\Delta t}
\newcommand{\Dx}{\Delta x}
\newcommand{\mC}{\mathcal{C}}
\newcommand{\mI}{\mathcal{I}}
\newcommand{\mJ}{\mathcal{J}}
\newcommand{\mK}{\mathcal{K}}
\newcommand{\ud}{u^{\d}}
\newcommand{\Pd}{P^{\d}}
\newcommand{\su}{\mathsf{u}}
\def\d{\delta}
\def\D{\Delta}
\def\norm#1{\left\|#1\right\|}
\def \f12{{\frac12}}
\def \jph{j + \smash{\f12}}
\def \jp3h{j + \smash{\frac32}}
\def \jph{j + \smash{\f12}}
\def \jmh{j - \smash{\f12}}
\newcounter{whitney}
\newcounter{ineqcounter}
\def\ps@pprintTitle{%
\let\@oddhead\@empty
\let\@evenhead\@empty
\def\@oddfoot{}%
\let\@evenfoot\@oddfoot}
\begin{document}

\begin{frontmatter}

 \title{A convergence rate result for front tracking 
 approximations of conservation laws with discontinuous flux}

 	\author[myaddress1]{Shyam Sundar Ghoshal}
 	\ead{ghoshal@tifrbng.res.in}


 	\address[myaddress1]{Centre for Applicable Mathematics,Tata Institute of Fundamental Research, Post Bag No 6503, Sharadanagar, Bangalore - 560065, India.}

 	\author[myaddress2]{John D. Towers}
 	\ead{john.towers@cox.net}

 	\address[myaddress2]
 	{MiraCosta College, 3333 Manchester Avenue, Cardiff-by-the-Sea, CA 92007-1516, USA.}

\begin{abstract}
	We consider the initial value problem for a scalar conservation law in one space dimension with a
	single spatial flux discontinuity, the so-called two-flux problem. We prove that a well-known 
	front tracking algorithm
	has a convergence rate of at least one-half. The fluxes are required to be smooth, but are not required to be
	convex or concave, monotone, or even unimodal. We require that there are no more than
	finitely many flux crossings, but we do not require that they satisfy the so-called crossing condition.
	If both fluxes are strictly increasing or strictly decreasing then our analysis yields a
	convergence rate of one, in agreement with a recent result. Similarly, if the fluxes
	are equal, i.e., there is no flux discontinuity, we obtain a convergence rate of one
	in this case also,
	in agreement with a classical result. 
	The novelty of this paper is that the class of discontinuous-flux conservation laws
	for which there is a front tracking error estimate is expanded, and that the method of analysis is new;
	we do not use the Kuznetsov lemma which is commonly used for this type of analysis.  		
	\end{abstract}
	\begin{keyword}
		conservation law \sep discontinuous flux \sep front tracking \sep rate of convergence 		\sep vanishing viscosity solution. 
    \MSC[2010] 35L65, 35R05, 65M12, 65M15
	\end{keyword}
	
\end{frontmatter}

\section{\bf Introduction}\label{sec_intro}

The subject of this paper is approximation via front tracking to solutions 
of an initial value problem for a scalar conservation law having a spatially discontinuous flux:
\begin{equation}\label{u_cauchy}
	\textrm{Problem $P$:} \quad
	\left\{
	\begin{split} 
		&u_t+\mF(x,u)_x   = 0 \quad 
		\textrm{for $(x,t)\in \Pi_T:=\R \times (0,T)$,} \\
		&\mF(x,u):= H(-x)g(u) + H(x)f(u),\\
		&u(x,0) =u_0(x) \quad  \textrm{for $x\in \R$.}  
	\end{split} 
	\right .
\end{equation}
Here $H(x)$ is the Heaviside function. Thus, the flux $\mathcal{F}(x,u)$ of this 
conservation law has a spatial dependence that is discontinuous at $x=0$  
if the functions $f$ and $g$ are different.

The front tracking algorithm \cite{Holden_Risebro,risebro_intro} computes approximate solutions $u^{\d}$ of Problem P by 
producing the exact solution to
a simpler problem $P^{\d}$ where  the flux $\mF$ is replaced by a piecewise linear approximation
$\mF^{\d}$ and the initial data $u_0$ is replaced by a piecewise constant approximation $u_0^{\d}$:
\begin{equation}\label{u_cauchy_p}
	\textrm{Problem $\Pd$:} \quad
	\left\{
	\begin{split} 
		&\ud_t+\mF^{\d}(x,\ud)_x   = 0 \quad 
		\textrm{for $(x,t)\in \Pi_T$,} \\
		&\mF^{\d}(x,u):= H(-x)g^{\d}(u) + H(x)f^{\d}(u),\\
		&\ud(x,0) =\ud_0(x) \quad  \textrm{for $x\in \R$.}  
	\end{split} 
	\right .
\end{equation}
Here $\d$ is a parameter that defines the level of discretization. Our main result (Theorem~\ref{theorem_rate}) is that the
front tracking algorithm described below produces approximate solutions that
converge to the so-called vanishing viscosity solution (Definition~\ref{weak_def}) at a rate of at least $O(\d^{1/2})$.
If $f$ and $g$ are both strictly monotone, we obtain 
 a convergence rate of $O(\d)$ (Theorem~\ref{theorem_rate_inc}), in agreement 
with a recent result of Ruf \cite{ruf_ft}. Finally, 
we also obtain 
a convergence rate of $O(\d)$
if $f=g$ (Theorem~\ref{theorem_rate_f_eq_g}), in agreement with a classical result due to
Lucier \cite{lucier_moving_mesh}. Our method of analysis is different from the
methods found in
\cite{ruf_ft} and \cite{lucier_moving_mesh}; for example we do not invoke
Kuznetsov's lemma \cite[Theorem 3.11]{Holden_Risebro}.

Front tracking is both a theoretical tool and a practical computational algorithm. It is more
complex to implement than finite difference / finite volume methods, but it is computationally
rapid and devoid of the numerical dissipation produced by finite difference / finite volume methods.

Before specifying the data of the problem we will need the following definition:
\begin{definition}[flux crossing]
We call $u_{\chi} $ 
a flux crossing if $f(u_\chi)=g(u_\chi)$, and for some $\epsilon>0$ it holds that
\begin{equation*}
	\textrm{$\left(g(u_L)-f(u_L)\right)
	\left(g(u_R)-f(u_R)\right)<0$ for all 
	$u_L \in (u_{\chi}-\epsilon,u_{\chi})$,
	$u_R \in (u_{\chi},u_{\chi}+ \epsilon)$}.
\end{equation*}
In less formal terms, a flux crossing $u_{\chi}$ is the location of a zero of the function
$f(u)-g(u)$ where the sign of $f-g$ changes.
\end{definition}

\begin{assumption}\label{assumptions_data}
We make the following assumptions about the data. In what follows
$\umin, \umax \in \R$ with $\umin<\umax$. 

\begin{Assumptions}

\item \label{fg_endpoints} $g(\umin) = f(\umin)$, $g(\umax) = f(\umax)$,

\item \label{fg_C2} $g, f \in C^2([\umin,\umax])$,

\item \label{nonlin_deg}
$f$ and $g$ are not linear on any nondegenerate interval,

\item \label{finite_crossings}
$f(u)\neq g(u)$ for $u \in (\umin,\umax)$, except for finitely many (possibly zero) flux crossings in the interval $(\umin,\umax)$,

\item \label{u0_Linf_bv} $u_0 \in \Linf(\R) \cap \BV(\R)$, and $u_0(x) \in [\umin,\umax]$ for all $x \in \R$,

\item \label{init_data_large_x} for some $X>0$, $u_0(x) = u_L$ for $x\le -X$ 
and $u_0(x) = u_R$ for $x \ge X$.

\end{Assumptions}
\end{assumption}

\begin{remark}\normalfont
Assumption~\ref{finite_crossings} requires that there be only finitely
many flux crossings, but we do not require that any of them satisfy
the so-called ``crossing condition'' \cite{KRT:L1,KT:VV}.
\end{remark}

\begin{remark}\normalfont
Items \ref{nonlin_deg} and \ref{finite_crossings} are required so that we can use
the results of \cite{KT:VV} concerning the convergence of a certain Godunov scheme
(which we use as an analytical tool) to a vanishing viscosity solution. 
In the special case where
both $f$ and $g$ are strictly increasing or strictly decreasing, the vanishing viscosity theory
simplifies greatly, and those assumptions are not needed. Similarly, if $f=g$ we do not need
\ref{nonlin_deg} or \ref{finite_crossings}. 
\end{remark}

{\bf The front tracking algorithm.} To describe the front track algorithm we must specify the construction
of $f^{\d}$, $g^{\d}$, and $u_0^{\d}$ for fixed $\d>0$. 
To specify  $f^{\d}$ and $g^{\d}$, choose a positive integer $K$, and 
breakpoints $\su_0, \su_1, \ldots, \su_K$ such that
 \begin{equation}
\umin = \su_0 < \su_1 < \cdots < \su_K=\umax, \quad 
\textrm{$\su_{k+1}-\su_k \le \d$ for $k=0,\ldots,K-1$.}
 \end{equation}
 Then $f^{\d}$ and $g^{\d}$ are defined to be the piecewise linear interpolants to $f$ and $g$ 
 such that $f^{\d}(\su_k) = f(\su_k)$ and $g^{\d}(\su_k) = g(\su_k)$ for $k=0,\ldots,K$.
  
The front tracking initial data $u_0^{\d}$ is constructed satisfying the following conditions:
\begin{equation}\label{v_init}
\begin{split}
&\textrm{1. $u_0^{\d}$ is piecewise constant with
finitely many jumps, and such that}\\
& u_0^{\d}(x) \in [\umin,\umax]\,\, \forall x\in \R, \quad \TV(u_0^{\d}) \le \TV(u_0), \\
&\textrm{2. condition~\ref{init_data_large_x} is satisfied with the same $X$ as for $u_0$, and}\\
&\textrm{$\norm{u_0^{\d}-u_0}_{L^1(\R)} = O(\d)$ as $\d \rightarrow 0$.}
\end{split}
\end{equation}

\begin{remark}\label{remark_delta}\normalfont
$f^{\d}$ and $g^{\d}$ clearly satisfy Assumption~\ref{fg_endpoints}, and for $\d$ sufficiently small,
Assumption~\ref{finite_crossings} also holds for $f^{\d}$, $g^{\d}$.
 It follows from Assumption~\ref{fg_C2} that $f, g \in \textrm{Lip}([\umin,\umax])$. Let $L_f$ and $L_g$ denote Lipschitz constants
for $f$ and $g$. It is clear that also $f^{\d}, g^{\d} \in \textrm{Lip}([\umin,\umax])$, and 
\begin{equation}\label{lip_delta}
L_{f^{\d}} \le L_f, \quad  L_{g^{\d}} \le L_g.
\end{equation}
 Moreover, it is clear from \eqref{v_init} that
$u_0^{\d}$ satisfies Assumptions \ref{u0_Linf_bv} and \ref{init_data_large_x}. 
Thus all parts of Assumption~\ref{assumptions_data} are satisfied for the data
associated with the front tracking algorithm, with exception of Assumption~\ref{nonlin_deg}. This assumption
is present to guarantee the existence of one sided traces along the interface at $x=0$.
With the addition of Assumption~\ref{finite_fronts}
below the required traces of $u^{\d}$ exist without Assumption~\ref{nonlin_deg}.
\end{remark}
  
 In view of Remark~\ref{remark_delta}, with $\d>0$ fixed and $f^{\d}$, $g^{\d}$, and $u_0^{\d}$ in hand, the front tracking algorithm then computes the exact vanishing viscosity solution, defined below (Definition~\ref{weak_def}), of problem $P^{\d}$. 
  The solution of problem $P^{\d}$ is piecewise constant, with discontinuities (fronts), some of which are moving. 
 The algorithm keeps track of the location of each front as time advances.
 Each time two fronts collide, the piecewise constant solution is updated by solving a Riemann problem.
 Each such Riemann problem is constructed so that the $\Gamma$ condition (Definition~\ref{def:gamma_condition})
 is satisfied, and the resulting front tracking solution is thus a vanishing viscosity solution (Definition~\ref{weak_def}).
 
 In order for the front tracking algorithm to be well-defined, there cannot be more than finitely many
 fronts present in $u^{\d}$ at any finite time $t \in [0,T]$. That this is the case has been proven in
some important cases \cite{BaitiJenssen,Coclite_Risebro,GNPT:2007,KlingbroI,piccoli_tournus,risebro_intro}, 
but not in all cases allowed by our assumptions about $f$ and $g$. Thus
 we add the following assumption:
\begin{assumption}\label{finite_fronts}
For any $t \in [0,T]$, $u^{\d}(\cdot,t)$ is piecewise constant with finitely many discontinuities.
\end{assumption}

The subject of conservation laws with discontinuous flux has been an active research area  during
the last several decades.
See e.g.,   
\cite{ADGG:2011, AGJ:2005, And_Cances, AKR:L1, AudussePerthame,badwaik_ruf,badwaik_random,BaitiJenssen, bressan_regulated, bkrt2, BKT:EO, crasta_structure, Diehl:2009, GNPT:2007,GJT_2019,GTV_2020,gtv_panov, KRT:L1, KT:LxF, KT:VV,KlingbroI, 
shearer, mishra_handbook, Panov2009a, piccoli_tournus, risebro_intro, seguin01, shen_polymer, towers1}
for a partial list of references.
These equations arise in a number of areas of application including
vehicle traffic flow in the presence of abruptly varying road conditions \cite{BGKT,GNPT:2007},
sedimentation in settling tanks \cite{bkrt2, Diehl:1995},
polymer flooding in oil recovery \cite{shen_polymer},
two phase flow in porous media \cite{AdimurthiJaffreGowda,And_Cances,GR:1992,Kaasschieter},
and particle size segregation in granular flow \cite{shearer}.

{\bf Vanishing viscosity solutions.}
Even in the classical setting where $f=g$, and even if the initial data $u_0$ is smooth,
solutions develop discontinuities, and so we seek weak solutions. However, weak
solutions are generally not unique. Because of this an additional condition, referred to as an
entropy condition, or admissibility condition, is  required.  In the case where $f=g$, the
Kru\v{z}kov entropy solution \cite{Holden_Risebro} is generally sought, and this resolves the uniqueness issue. 
In the case of a spatially discontinuous flux, the Kru\v{z}kov entropy inequalities
do not make sense, and so some other entropy condition is required. The situation is further
complicated by the fact that
there is not just a single notion of entropy solution for the problem \eqref{u_cauchy}, see \cite{AKR:L1}. We will
work with the so-called vanishing viscosity solution \cite{AKR:L1, AKR:VV, And_Mit,bressan_regulated,KT:VV}, which we now set out to define.

The notion of traces appears in the definition of vanishing viscosity
solution, by which we mean the following limits:
\begin{equation}\label{traces}
	u(0^-,t) := \esslim_{x\uparrow 0} u(x,t),
	\qquad u(0^+,t) := \esslim_{x \downarrow 0} u(x,t).
\end{equation}
In the sequel we will sometimes use the abbreviations 
$u(0^-,t) = u_-$, $u(0^+,t) = u_+$ for the traces 
appearing in \eqref{traces}.

Let $\co(a,b)$ denote 
the interval $[\min(a,b),\max(a,b)]$. The following is Diehl's $\Gamma$ 
condition \cite{AKR:L1,AKR:VV, Diehl:1995, Diehl:2009}, which will be used below in the definition of vanishing viscosity solutions:
\begin{definition}[$\Gamma$ condition]\label{def:gamma_condition}
The pair $(u_{-},u_{+})$ satisfies the $\Gamma$ condition if
\begin{equation*}
	\begin{split}
	&\textrm{$g(u_{-}) = f(u_{+})$, and there exists $u_{\Gamma} \in \co(u_{-},u_{+})$  such that}\\
	&\textrm{$(u_{+} - u_{\Gamma})(f(z)-f(u_{+})) \ge 0 \,\, \forall z \in \co(u_{+},u_{\Gamma})$ and}\\
	&\textrm{$(u_{\Gamma}- u_{-})(g(z)-g(u_{-})) \ge 0 \,\, \forall z \in \co(u_{-},u_{\Gamma})$.}
	\end{split}
\end{equation*}
\end{definition}

\begin{remark}\normalfont
The condition $g(u_{-}) = f(u_{+})$ appearing in the definition 
is the familiar Rankine-Hugoniot condition.
\end{remark}

\begin{definition}[Vanishing viscosity solution, \cite{AKR:L1,AKR:VV}]\label{weak_def}
 A measurable function $u: \Pi_T \rightarrow \R$ is a 
{\em vanishing viscosity solution}  of the initial value problem \eqref{u_cauchy} if it 
satisfies the following conditions:
\begin{Definitions}
	\item $u\in L^{\infty}(\Pi_T)$; $u(x,t)\in [\underline{u},\overline{u}]$ for 
	a.e.~$(x,t)\in \Pi_T$. \label{def:weak1}

	\item For all test functions $\phi\in \mathcal{D}(\mathbb{R} \times [0,T))$,
	\begin{align}
		\label{weak_cond}
		\begin{split}
			\iint_{\Pi_T}\bigl(u \phi_t &
			+ \mathcal{F}(x,u) \phi_x \bigr)\,dx \, dt
			+\int_{\mathbb{R}} u_0(x) \phi(x,0) \, dx =0.
		\end{split}
        \end{align}\label{def:weak2}

	\item For any test function $0\le \phi \in \mathcal{D}(\mathbb{R} \times [0,T))$ 
	which vanishes for $x \ge 0$,
	\begin{align}\label{ent_g}
                \begin{split} 
			& \iint_{\Pi_T}\Bigl(|u-c|\phi_t  +  \sgn(u-c)\bigl(g(u)-g(c)\bigr)\phi_x
			\Bigr)\,dx\,dt \\ & 
			\qquad
			+ \int_{\mathbb{R}} |u_0-c| \phi(x,0) \,dx \geq 0,
		 	\quad \forall c \in [\underline{u},\overline{u}], 
		 \end{split} 
	\end{align}
	and for any test function $0\le \phi \in \mathcal{D}(\mathbb{R} \times [0,T))$ 
	which vanishes for $x \le 0$,
	\begin{align}\label{ent_f}
                \begin{split} 
			& \iint_{\Pi_T}\Bigl(|u-c|\phi_t  
			+\sgn(u-c)\bigl(f(u)-f(c)\bigr)\phi_x \Bigr)\,dx\,dt\\
			& \qquad 
			+  \int_{\mathbb{R}} |u_0-c| \phi(x,0) \,dx  \geq 0, 
			\quad \forall c \in [\underline{u},\overline{u}].
                \end{split} 
        \end{align}\label{def:weak3}

        \item \label{ent_gamma} For a.e. $t \in (0,T)$, the traces $u_-:=u(0^-,t)$ 
	and $u_+:=u(0^+,t)$  satisfy the $\Gamma$ condition. 
	\label{def:weak4}
\end{Definitions}

A measurable function $u:\Pi_T \rightarrow \mathbb{R}$ satisfying conditions 
\ref{def:weak1} and \ref{def:weak2} is called a weak solution of the 
initial value problem \eqref{u_cauchy}.
\end{definition}

\begin{remark}\normalfont
Note that condition~\ref{ent_gamma} assumes the existence of traces.
For the solution $u$ of problem $P$, 
existence of traces follows from Assumption~\ref{nonlin_deg} and condition~\ref{def:weak3},
see Lemma 3.1 of \cite{bkrt2}. For the front tracking solution $u^{\d}$ of problem $P^{\d}$, existence of traces follows from
Assumption~\ref{finite_fronts}.
\end{remark}

\begin{theorem}[Uniqueness of vanishing viscosity solutions, \cite{AKR:L1,AKR:VV}]\label{thm:unique}
Let $u$, $v$ be two vanishing viscosity solutions 
in the sense of Definition \ref{weak_def} of 
the initial value problem \eqref{u_cauchy}, with initial 
data $u_0,v_0\in L^\infty(\mathbb{R};[0,1])$, $\abs{u_0-v_0} \in L^1(\R)$. 
Then, for a.e.~$t\in (0,T)$,
\begin{equation*}
	\int_{\R} \bigl| u(x,t)-v(x,t) \bigr|\, dx \le 
	\int_{\R} \bigl|u_0(x)-v_0(x) \bigr|  \, dx.
\end{equation*}
In particular, there exists at most one vanishing viscosity solution 
of \eqref{u_cauchy}. If Assumption~\ref{finite_fronts} is satisfied the same result holds for vanishing viscosity solutions of
the front tracking problem \eqref{u_cauchy_p}.
\end{theorem}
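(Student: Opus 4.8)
The plan is to run Kru\v{z}kov's doubling-of-variables argument separately on the two quarter-planes $\Pi_T^-:=(-\infty,0)\times(0,T)$ and $\Pi_T^+:=(0,\infty)\times(0,T)$, to add the two resulting Kato inequalities into a single global inequality carrying an interface term along $\{x=0\}$, and then to use the $\Gamma$ condition to show that this interface term has a sign that lets it be discarded. In $\Pi_T^+$ both $u$ and $v$ are Kru\v{z}kov entropy solutions of $w_t+f(w)_x=0$ by the inequality \eqref{ent_f}, and the one-sided traces $u(0^+,\cdot),v(0^+,\cdot)$ exist by the remark following Definition~\ref{weak_def}; doubling the variables, testing the entropy inequality for $u$ against $c=v$ and that for $v$ against $c=u$, and collapsing to the diagonal in the usual way, I would obtain, for every $0\le\phi\in\mathcal D(\R\times[0,T))$,
\[ \iint_{\Pi_T^+}\bigl(|u-v|\phi_t+\sgn(u-v)(f(u)-f(v))\phi_x\bigr)\dx\dt+\int_{\R}|u_0-v_0|\phi(x,0)\dx \ge -\int_0^T\sgn(u_+-v_+)\bigl(f(u_+)-f(v_+)\bigr)\phi(0,t)\dt, \]
where $u_\pm:=u(0^\pm,t)$, $v_\pm:=v(0^\pm,t)$; the right-hand side is the flux contribution at $x=0^+$ that survives because $\phi$ need not vanish there. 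The analogous computation on $\Pi_T^-$ (with $g$ in place of $f$, traces from $0^-$) gives the same inequality with the boundary term replaced by $+\int_0^T\sgn(u_--v_-)(g(u_-)-g(v_-))\phi(0,t)\dt$.

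Adding these two inequalities yields, for every $0\le\phi\in\mathcal D(\R\times[0,T))$,
\[ \iint_{\Pi_T}\bigl(|u-v|\phi_t+\sgn(u-v)(\mF(x,u)-\mF(x,v))\phi_x\bigr)\dx\dt+\int_{\R}|u_0-v_0|\phi(x,0)\dx \ge \int_0^T I(t)\,\phi(0,t)\dt, \]
where $I(t):=\sgn(u_--v_-)(g(u_-)-g(v_-))-\sgn(u_+-v_+)(f(u_+)-f(v_+))$. This is the heart of the matter. Since $(u_-,u_+)$ and $(v_-,v_+)$ both satisfy the $\Gamma$ condition (condition~\ref{ent_gamma}), the Rankine--Hugoniot identities $g(u_-)=f(u_+)$ and $g(v_-)=f(v_+)$ hold, so $g(u_-)-g(v_-)=f(u_+)-f(v_+)=:\Delta$ and $I(t)=\bigl(\sgn(u_--v_-)-\sgn(u_+-v_+)\bigr)\Delta$. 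It therefore remains to show $I(t)\ge0$, i.e.\ that whenever $\sgn(u_--v_-)\neq\sgn(u_+-v_+)$ the sign of $\Delta$ agrees with that of $u_--v_-$ --- equivalently, that the set of pairs satisfying Definition~\ref{def:gamma_condition} is an $L^1$-dissipative germ in the sense of \cite{AKR:L1}. I would prove this using the two monotonicity inequalities in Definition~\ref{def:gamma_condition}, which pin down the admissible stationary profiles through $x=0$ for $u$ and for $v$, and then splitting into the finitely many cases determined by the relative order of $u_-,v_-$ (resp.\ of $u_+,v_+$) and of the associated intermediate values $u_\Gamma$, invoking the $C^2$ regularity \ref{fg_C2}. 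This germ-dissipativity step is the main obstacle; the rest is routine Kru\v{z}kov machinery.

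Granting $I(t)\ge0$ (a.e.\ $t$), the previous display is the Kato inequality with nonnegative right-hand side, valid for every $0\le\phi\in\mathcal D(\R\times[0,T))$. I would then choose $\phi(x,t)=\psi_R(x)\chi(t)$ with $\psi_R$ a spatial cutoff equal to $1$ on $[-R,R]$, use finite speed of propagation together with $|u_0-v_0|\in L^1(\R)$ to make the term $\iint\sgn(u-v)(\mF(x,u)-\mF(x,v))\psi_R'\chi\dx\dt$ vanish as $R\to\infty$, and finally let $\chi\uparrow\mathbf 1_{[0,t_0]}$ to obtain $\int_\R|u(x,t_0)-v(x,t_0)|\dx\le\int_\R|u_0-v_0|\dx$ for a.e.\ $t_0\in(0,T)$; uniqueness is the special case $u_0=v_0$. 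The very same argument proves the statement for the front tracking problem \eqref{u_cauchy_p}: by Remark~\ref{remark_delta} and Assumption~\ref{finite_fronts}, $\ud$ satisfies all conditions of Definition~\ref{weak_def} with $(f^{\d},g^{\d})$ in place of $(f,g)$, and Assumption~\ref{finite_fronts} supplies the one-sided traces used throughout.
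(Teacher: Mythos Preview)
The paper does not actually supply a proof of Theorem~\ref{thm:unique}; it is quoted as a known result from \cite{AKR:L1,AKR:VV}, so there is no ``paper's own proof'' to compare against. Your outline is precisely the argument of those references: Kru\v{z}kov doubling on each half-line yields Kato inequalities with interface flux terms, adding them produces the single boundary term $I(t)=q^g(u_-,v_-)-q^f(u_+,v_+)$, and the Rankine--Hugoniot relations collapse this to $\bigl(\sgn(u_--v_-)-\sgn(u_+-v_+)\bigr)\Delta$. The assertion $I(t)\ge 0$ is exactly the statement that the set of $\Gamma$-admissible pairs is an $L^1$-dissipative germ, which is the content of \cite[Lemma~3.4]{AKR:VV} (see also \cite[Section~4.8]{AKR:L1}). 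So your plan is correct and coincides with the cited approach.

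One comment on the step you flag as ``the main obstacle'': the dissipativity check is not merely a finite case split on the signs of $u_--v_-$ and $u_+-v_+$; the intermediate values $u_\Gamma,v_\Gamma$ from Definition~\ref{def:gamma_condition} must be used in an essential way, and the argument works under Lipschitz regularity of $f,g$ alone (your appeal to the $C^2$ hypothesis \ref{fg_C2} is unnecessary here). The cleanest route is to observe that the $\Gamma$ condition is equivalent to the existence of a monotone stationary profile joining $u_-$ to $u_+$ through the value $u_\Gamma$, and then to apply the comparison between two such profiles. If you intend to write the proof out in full, it is worth consulting the cited lemma rather than reinventing the case analysis.
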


The vanishing viscosity
solution has other characterizations, such as the minimal jump condition
of Gimse and Risebro \cite{GR:1991, GR:1992}  and
the $\Gamma_1$-condition of \cite{KT:VV}.
When the crossing condition \cite{KRT:L1,KT:VV} is satisfied, 
the entropy solution of \cite{KRT:L1} is also the vanishing viscosity solution. 
Under certain conditions (eg., as in Example~\ref{example_1}) the 
vanishing viscosity solution coincides with the optimal entropy solution of
\cite{AGJ:2005}.
Reference \cite{guerra_shen_bkwd} uses semigroup theory to define
vanishing viscosity solutions.
The vanishing viscosity solution concept has been generalized to multidimensional
conservation laws with discontinuous flux \cite{AKR:VV,And_Mit,bulicek2,Crasta:2015aa,Panov2009a}, and
to the case where the flux has discontinuities that are both spatial and temporal \cite{bressan_regulated}.

\begin{example}\label{example_1} \normalfont
A simple but important example 
which has been studied extensively
has the form
\begin{equation}\label{example_traffic}
u_t + (k(x) p(u))_x = 0, \quad p(u) = u(1-u), 
\quad \textrm{$\umin=0$, $\umax=1$},
\end{equation}
with $k(x) = (1-H(x))k_L + H(x)k_R$ for some $k_L, k_R >0$.
Here $g(u)=k_L p(u)$, $f(u)=k_R p(u)$.
Our front tracking error estimate applies
to this example, and to our knowledge it is the first such.
The conservation law  \eqref{example_traffic} can be viewed as a model of vehicle traffic on a unidirectional road (the so-called
Lighthill-Witham-Richards model), where the parameter $k(x)$ is a spatially dependent
maximum vehicle speed.
In this case there is no flux crossing, so Assumption~\ref{finite_crossings} is
trivially satisfied. For this example the vanishing viscosity solution is the
same as many of the other types of solution that have been defined, including the minimal jump solution,
the entropy solution of \cite{KRT:L1}, and the optimal entropy solution. 
Although the spatial variation of the solution is not generally bounded, one can obtain
a bound on the spatial variation of the so-called singular mapping \cite{AdimurthiJaffreGowda,KlingbroI,seguin01,towers1}. 
Since $dp/du$ only vanishes at a single point, the singular mapping is invertible, which guarantees
the existence of traces for $u$ \cite{KRT:L1}. 
If the break points 
$\{\su_k\}$ are
chosen so that $dp^{\d}/du \neq 0$ on any nontrivial interval, then one also has
traces for the front tracking solution $u^{\d}$,
independent of Assumption~\ref{finite_fronts}. 
\end{example}

When $f=g$, Lucier \cite{lucier_moving_mesh} proved a first order convergence rate for front tracking. 
More recently reference \cite{solem_ft} has proven higher convergence rates in the Wasserstein norms,
for the case $f=g$.
There are currently only a few convergence rate results for algorithms
pertaining to nonlinear conservation laws with discontinuous flux, to our knowledge only \cite{badwaik_ruf},
\cite{gtv_panov} and \cite{ruf_ft}. Reference \cite{badwaik_ruf}
proves a convergence rate of one half for a monotone finite volume scheme in
the case where the flux has finitely many spatial discontinuities and
the dependence of the flux upon $u$ is strictly monotone.
Under similar assumptions reference~\cite{ruf_ft} proves a first order
convergence rate for a front tracking algorithm. 
Reference \cite{gtv_panov} proves a convergence rate of one half for a Godunov-type
finite volume algorithm applied to conservation law whose flux is of Panov-type and  has spatial discontinuities.
References \cite{badwaik_ruf},
\cite{gtv_panov} and \cite{ruf_ft} employ Kuznetsov-type lemmas \cite{Holden_Risebro,kuznetsov}.
The results of the present paper do not use a Kuznetsov-type lemma. In order to
use a Kuznetsov-type lemma a spatial $\BV$ bound is required for the solution
and the approximations. $\BV$ bounds are available when both $f$ and $g$ are
strictly monotone, but in general no $\BV$ bound is available when $f \neq g$ \cite{ADGG:2011,SSG1,SSG2}.
Thus the novelty of the present paper is twofold. The class of discontinuous-flux conservation laws
for which there is a front tracking error estimate is expanded, and the method of analysis is new.

In Section~\ref{Sec:scheme} we describe a Godunov-type finite volume scheme that is used
as an analytical tool. Section~\ref{sec_rate_conv} contains the statement and proof of Theorem~\ref{theorem_rate}, our main result, 
which is a
rate of convergence of $O(\d^{1/2})$ for the front tracking algorithm. In Section~\ref{sec_conv_bv} we apply
our result to the simpler situation where a spatial $\BV$ bound is known for the solution. We
arrive at a rate of $O(\d)$ in this situation. This includes the case where $f=g$, and also the case where $f$ and $g$ are both strictly monotone. The $O(\d)$ result correlates
with previously established results in these two cases \cite{lucier_moving_mesh,ruf_ft}. 

\section{A Godunov finite volume algorithm} \label{Sec:scheme}
Although the convergence rate result of this paper is for a front tracking algorithm, a certain finite volume algorithm makes an
appearance as an analytical tool. 
Specifically, we use the Godunov-type scheme of \cite{Diehl:1995,KT:VV}, which we describe next.

We begin by
discretizing the spatial domain $\R$ into cells
$$
I_j= \bigl(x_{j}-\Dx/2,x_{j}+\Dx/2\bigr]
=\bigl(x_{j-\frac12},x_{j+\frac12}\bigr],
$$
where $x_j=j\Dx$ for $j\in \mathbf{Z}$.  Similarly, the time
interval $[0,T]$ is discretized via $t^n=n\Dt$ for $n=0,\dots,N$,
where the integer $N$ is such that $N\Dt\in [T,T+\D t)$, resulting 
in the time strips
$$
I^n=[t^n,t^{n+1}).
$$
Here $\Dx>0$ and $\Dt>0$ denote the spatial and temporal
discretization parameters respectively.
With $\lambda = \Dt / \Dx$, we will assume that the 
mesh size $\Delta :=(\Delta x, \Delta t)$ approaches
zero with $\lambda$ fixed.
We will assume that the following CFL condition holds:
\begin{equation}\label{CFL}
	\lambda \cdot \max \left( L_f , L_g\right) \le 1/2.
\end{equation}

We will use $U_j^n$ to denote the resulting finite
difference approximation; $U_j^n \approx u(x_j,t^n)$.
The initial data is discretized via cell averages:
\begin{equation}\label{disc_u}
     U_j^0 = \frac{1}{\Dx} \int_{x_{\jmh}}^{x_{\jph}} u_0(x)\dx.
\end{equation}
Let $\chi_j(x)$ and $\chi^n(t)$ denote the characteristic functions for
the intervals $I_j$ and $I^n$, respectively.  
The finite difference solution $\left\{U_j^n\right\}$
is extended to all of $\Do$ by defining
\begin{equation*}
     u^{\D}(x,t) = \sum_{n= 0}^N \sum_{j \in \mathbf{Z}}\chi_j(x) \chi^n(t)U_j^n,
     \qquad (x,t)\in \Do.
     \label{eq:uDdef}
\end{equation*}
We use $\D_+$ and $\D_-$ to designate
the difference operators in the $x$ direction, e.g.,
$$
\D_+ Z_j = Z_{j+1} - Z_j, \quad
\D_- Z_j = Z_{j} - Z_{j-1}.
$$
For a grid function $\{Z_j^n\}$ and $q \in C([\umin,\umax])$ we use
the notation
\begin{equation}
\norm{Z^n}_{\infty} = \max_{j \in \Z} \abs{Z_j^n},
\quad \norm{q}_{\infty} = \max_{u\in [\umin,\umax]} \abs{q}.
\end{equation}

\noindent
Given a Lipschitz function $q(u)$, we denote by 
$\bar{q}(v,u)$ the associated Godunov numerical flux function:
\begin{equation}\label{gdnvflux_0}
	\bar{q}(v,u)=
	\begin{cases}
		\min\limits_{w \in [u,v]} q(w), &\textrm{if}
		\hspace{3pt}u \leq v, \\
		\max\limits_{w \in [v,u]} q(w), &\textrm{if}
		\hspace{3pt}u > v.
	\end{cases}
\end{equation}
The function $\bar{q}(v,u)$ is a monotone numerical flux \cite{CranMaj:Monoton,Holden_Risebro}, i.e., nondecreasing with
respect to $u$, nonincreasing with respect to $v$, and satisfies $\bar{q}(u,u) = q(u)$.
It is readily verified that if $L_q$ is a Lipschitz constant for $q$, then
\begin{equation}\label{barq_lip}
\abs{\bar{q}(\tilde{u},v) - \bar{q}(u,v)} \le L_q \abs{\tilde{u}-u}, \quad
\abs{\bar{q}(u,\tilde{v}) - \bar{q}(u,v)} \le L_q \abs{\tilde{v}-v}
\end{equation}
if $u,v,\tilde{u},\tilde{v} \in [\umin,\umax]$.
The approximate solution is advanced 
from one time level to the next 
via the following finite difference formula:
\begin{equation}\label{scheme}
	U_j^{n+1} = U_j^n - \lambda \D_-h_{\jph}(U_{j+1}^n,U_j^n),
\end{equation}
where $h_{\jph}$ is the spatially dependent Godunov numerical flux:
\begin{equation}\label{gdnvflux}
	h_{\jph}(v,u)=
	\begin{cases}
		\bar{g}(v,u), &\textrm{if}
		\hspace{3pt} x_{\jph}<0, \\
		\bar{f}(v,u), &\textrm{if}
		\hspace{3pt} x_{\jph}>0.
	\end{cases}
\end{equation}
Here $\bar{f}, \bar{g}$ refer to the Godunov fluxes consistent with $f, g$ defined by
\eqref{gdnvflux_0}.

 Similarly, $U^{\d,n}_j$ denotes the numerical approximation when the Godunov scheme above is applied 
 with flux $\mF^{\d}$ and initial data $u^{\d}_0$. The numerical flux $h_{\jph}^{\d}$ is the version of \eqref{gdnvflux} that
 results when
 $\bar{f}, \bar{g}$ are replaced by $\bar{f}^{\d}, \bar{g}^{\d}$.  The CFL condition \eqref{CFL} applies equally
 well to the scheme for $U_j^{\d,n}$, due to \eqref{lip_delta}.
 
 Recall that according to Assumption~\ref{init_data_large_x}, $u_0(x) = u_L$ for $x \le -X$.
 Referring to the Godunov algorithm used to define $U_j^n$  and $\hat{U}_j^{\d,n}$, it is
 clear that $U_j^n = u_L$  and $\hat{U}_j^{\d,n}=u_L$ for $j<0$ with $\abs{j}$ sufficiently large.
 Associated with $U_j^n$  and $\hat{U}_j^{\d,n}$ are the following discrete indefinite integrals:
 \begin{equation}
 \hat{U}_j^n = \D x \sum_{i\le j} \left(U_i^n - u_L \right),
 \quad
 \hat{U}_j^{\d,n} = \D x \sum_{i\le j} \left(U_i^{\d,n} - u_L \right).
 \end{equation}
 Note that
 \begin{equation}\label{U_Uhat_formula}
 \D_- \hat{U}_j^n/\D x = U_j^n - u_L, \quad  \D_- \hat{U}_j^{\d,n}/\D x = U_j^{\d,n} - u_L.
 \end{equation}
By applying the same discrete integration operation to the marching formula \eqref{scheme} we obtain
 \begin{equation}\label{HJ_scheme}
 \hat{U}_j^{n+1} =  \hat{U}_j^{n} - \D t\, h_{\jph}(U_{j+1}^n, U_j^n),
 \quad
 \hat{U}_j^{\d,n+1} =  \hat{U}_j^{\d,n} - \D t\, h^{\d}_{\jph}(U_{j+1}^{\d,n}, U_j^{\d,n}).
 \end{equation}
 
  \noindent
  These grid functions are then extended to functions defined on $\Do$ via 
 \begin{equation}\label{extend_hats}
 \hat{u}^{\D}(x,t) 
 = \sum_{n=0}^N \sumj \chi^n(t) \chi_j(x) 
 \left(
 \hat{U}_{j-1}^n + \frac{x-x_{\jmh}}{\Dx}\left(\hat{U}_{j}^n - \hat{U}_{j-1}^n\right)
  \right),
 \end{equation}
 and similarly for $\hat{u}^{\d,\D}(x,t)$.
 From \eqref{extend_hats} one finds that
 \begin{equation}\label{hat_disc_integrals}
  \hat{u}^{\D}(x,t)  =  \int_{-\infty}^x \left(u^{\D}(y,t)-u_L\right) \,dy, \quad
  \hat{u}^{\d,\D}(x,t)  =  \int_{-\infty}^x \left(u^{\d,\D}(y,t)-u_L\right) \,dy.
 \end{equation}

  \begin{theorem}[Theorem 5.4 of \cite{KT:VV}]\label{theorem_convergence}
 Problem $P$ has a unique vanishing viscosity solution, $u$. Similarly,
 Problem $P^{\d}$ has a unique vanishing viscosity solution, $u^{\d}$.
 Let $u^\D$ denote the finite volume approximation to $u$,
and $u^{\d,\D}$ denote the finite volume approximation to $u^{\d}$.
 Then, as $\D \rightarrow 0$, 
 $u^\D \rightarrow u$ and  $u^{\d,\D} \rightarrow u^{\d}$
  in $L^1_{\mathrm{loc}}(\Pi_T)$ and a.e.~in $\Pi_T$.
 \end{theorem}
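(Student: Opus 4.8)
The uniqueness half of the statement is already available: that $P$, resp.\ $P^\delta$, has at most one vanishing viscosity solution is exactly Theorem~\ref{thm:unique} (for $P^\delta$ one also invokes Assumption~\ref{finite_fronts}, which supplies the interface traces that condition~\ref{ent_gamma} refers to). So the plan is to prove existence together with convergence of the Godunov approximations, the former dropping out of the latter. I would run the argument for $u^\Delta\to u$ and then observe that the case $u^{\delta,\Delta}\to u^\delta$ proceeds in the same way, using Remark~\ref{remark_delta} to see that $f^\delta,g^\delta,u_0^\delta$ satisfy the same structural hypotheses as $f,g,u_0$, using \eqref{lip_delta} to keep the CFL condition \eqref{CFL} in force, and using Assumption~\ref{finite_fronts} in place of Assumption~\ref{nonlin_deg} to produce the traces of $u^\delta$.

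First I would collect the a priori estimates. Since $\bar f,\bar g$ are monotone numerical fluxes and \eqref{CFL} holds, the update \eqref{scheme} is a monotone map in $(U_{j-1}^n,U_j^n,U_{j+1}^n)$. By Assumption~\ref{fg_endpoints} the constants $\umin$ and $\umax$ are stationary solutions of the scheme on all of $\R$ — this is exactly the purpose of the matching of $f$ and $g$ at the endpoints — so comparison gives $U_j^n\in[\umin,\umax]$ for all $j,n$, hence a uniform $\Linf(\Pi_T)$ bound for $u^\Delta$. Monotonicity together with \eqref{gdnvflux} also yields the Crandall--Majda cell entropy inequalities
\[
\abs{U_j^{n+1}-c}\le\abs{U_j^n-c}-\lambda\,\D_-\bigl[\,h_{\jph}(U_{j+1}^n\vee c,\;U_j^n\vee c)-h_{\jph}(U_{j+1}^n\wedge c,\;U_j^n\wedge c)\,\bigr],\qquad c\in[\umin,\umax].
\]

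Next I would establish strong compactness of the family $\{u^\Delta\}$. In general no spatial $\BV$ bound for $u^\Delta$ is available — this is precisely the obstruction the present paper must circumvent — so, following \cite{KT:VV}, I would exploit the non-degeneracy Assumptions~\ref{nonlin_deg} and~\ref{finite_crossings}. A convenient route is a uniform-in-$\Delta$ $\BV$ bound for a singular mapping of $u^\Delta$: one passes to the transformed grid function $z_j^n=\Psi^{-}(U_j^n)$ when $x_j<0$ and $z_j^n=\Psi^{+}(U_j^n)$ when $x_j>0$, with $\Psi^{-}$, resp.\ $\Psi^{+}$, a primitive of $\abs{g'}$, resp.\ $\abs{f'}$ (in the spirit of \cite{AdimurthiJaffreGowda,KlingbroI,seguin01,towers1}); Assumptions~\ref{nonlin_deg} and~\ref{finite_crossings} make $\Psi^{\pm}$ strictly monotone with uniformly continuous inverse and control the Godunov Riemann solver at the finitely many crossings, so a variation bound on $\{z_j^n\}$ transfers to $L^1_{\mathrm{loc}}(\R)$-precompactness of $\{u^\Delta(\cdot,t)\}$. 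Combined with the standard time-continuity estimate read off from \eqref{scheme} (equivalently, with the uniform spatial-Lipschitz and temporal-modulus bounds on the Hamilton--Jacobi quantities $\hat u^\Delta$ of \eqref{HJ_scheme}--\eqref{hat_disc_integrals}), a Helly/Kolmogorov argument produces a subsequence $\Delta_k\to0$ with $u^{\Delta_k}\to u$ in $\Lenloc(\Pi_T)$ and a.e.~in $\Pi_T$.

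Finally I would identify the limit. A Lax--Wendroff argument applied to the conservative form \eqref{scheme}, using consistency of the Godunov fluxes, shows $u$ satisfies the weak formulation \eqref{weak_cond}; passing to the limit in the cell entropy inequalities above against nonnegative test functions supported in $\{x<0\}$, resp.\ $\{x>0\}$, gives \eqref{ent_g}, resp.\ \eqref{ent_f}, and — together with Assumption~\ref{nonlin_deg} — the one-sided interface traces $u_\pm(t)$ of $u$ (as in Lemma~3.1 of \cite{bkrt2}). The main obstacle will be condition~\ref{ent_gamma}: one must show those traces satisfy the $\Gamma$ condition of Definition~\ref{def:gamma_condition}. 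Here the precise form of the interface flux in \eqref{gdnvflux} is decisive — the numerical flux just to the left of $x=0$ is $\bar g$ and just to the right is $\bar f$, linked through their common interface value — and a careful analysis of the Godunov Riemann solver at the discontinuity, localized via the singular-mapping variation bound near $x=0$, forces in the limit both the Rankine--Hugoniot identity $g(u_-)=f(u_+)$ and the one-sided extremality relations that define an admissible $u_\Gamma$; this interface analysis is the technical core of \cite{KT:VV}. Once \ref{ent_gamma} is in place, $u$ is a vanishing viscosity solution of $P$, which gives existence; Theorem~\ref{thm:unique} then gives uniqueness; and since every subsequential limit equals this unique $u$, the whole family $u^\Delta$ converges. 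Rerunning the three steps with $\mF^\delta,u_0^\delta$ in place of $\mF,u_0$ yields the statements for $u^\delta$ and $u^{\delta,\Delta}$.
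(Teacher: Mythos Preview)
The paper does not prove this theorem at all: it is stated as Theorem~5.4 of \cite{KT:VV} and invoked as a black box, with no accompanying proof. Your proposal is therefore not comparable to ``the paper's own proof'' because there is none; what you have written is a reasonable high-level sketch of the argument in \cite{KT:VV} (monotonicity and $L^\infty$ bounds, singular-mapping compactness in lieu of a $\BV$ bound, Lax--Wendroff and discrete entropy passage to the limit, interface analysis for the $\Gamma$ condition), and it correctly flags the interface step as the technical core and the role of Assumption~\ref{finite_fronts} for the $P^\delta$ case.
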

 
 \begin{remark}\normalfont
 We will make use of the fact, which follows from the analysis in \cite{KT:VV}, that $[\umin,\umax]$
 is an invariant region for all of 
 $u^{\D}$, $u^{\d,\D}$, $u$, $u^{\d}$.
 \end{remark}
  
Associated with the vanishing viscosity solution $u$, we define the indefinite integrals
\begin{equation}\label{hat_def_integrals}
\hat{u}_0(x) = \int_{-\infty}^x \left(u_0(y)-u_L \right)\, dy, \quad \hat{u}(x,t) = \int_{-\infty}^x \left(u(y,t)-u_L\right) \, dy,
\end{equation}
and similarly for $\hat{u}^{\d}_0(x)$ and $\hat{u}^{\d}(x,t)$.

  \begin{remark}\normalfont \label{remark_conv}
  By using Lemma 5.1 of \cite{KT:VV}, which contains a discrete time continuity estimate, we also
  have $u, u^{\d} \in C([0,T]:L^1_{\mathrm{loc}}(\R))$. Moreover, 
 using the argument in the proof of Theorem 1 of \cite{CranMaj:Monoton},
  for $t \in [0,T]$, we have
  \begin{equation}\label{L1_conv_time_slice}
  \norm{u(\cdot,t) - u^{\D}(\cdot,t)}_{L^1(\R)} \rightarrow 0, \quad
   \norm{u^{\d}(\cdot,t) - u^{\d,\D}(\cdot,t)}_{L^1(\R)} \rightarrow 0 \,\,
   \textrm{as $\D \rightarrow 0$}.
  \end{equation}
Using \eqref{hat_disc_integrals}, \eqref{hat_def_integrals} and \eqref{L1_conv_time_slice}  it is readily established that for $t \in [0,T]$
\begin{equation}\label{u_hat_conv}
\norm{\hat{u}(\cdot,t) - \hat{u}^{\D}(\cdot,t)}_{L^{\infty}(\R)} \rightarrow 0, \quad
   \norm{\hat{u}^{\d}(\cdot,t) - \hat{u}^{\d,\D}(\cdot,t)}_{L^{\infty}(\R)} \rightarrow 0 \,\,
   \textrm{as $\D \rightarrow 0$}.
\end{equation}
\end{remark}

\begin{lemma}\label{lemma_gdv_approx}
For $u,v \in [\umin,\umax]$,
\begin{equation}\label{gdv_approx}
\abs{\bar{f}^{\d}(v,u)-\bar{f}(v,u)} \le \norm{f^{\d}-f}_{\infty}, \quad
\abs{\bar{g}^{\d}(v,u)-\bar{g}(v,u)} \le \norm{g^{\d}-g}_{\infty}.
\end{equation}
\end{lemma}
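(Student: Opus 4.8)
The plan is to establish the pointwise bound $|\bar f^\delta(v,u) - \bar f(v,u)| \le \|f^\delta - f\|_\infty$ directly from the defining formula \eqref{gdnvflux_0}, treating the two cases $u \le v$ and $u > v$ in parallel; the argument for $\bar g^\delta$ versus $\bar g$ is identical with $f$ replaced by $g$. First I would fix $u,v \in [\umin,\umax]$ and, without loss of generality, assume $u \le v$, so that $\bar f(v,u) = \min_{w \in [u,v]} f(w)$ and $\bar f^\delta(v,u) = \min_{w \in [u,v]} f^\delta(w)$. The key elementary fact is the standard inequality for minima: for any two bounded functions $\varphi, \psi$ on a common compact set $K$,
\[
\Bigl| \min_{w \in K} \varphi(w) - \min_{w \in K} \psi(w) \Bigr| \le \max_{w \in K} |\varphi(w) - \psi(w)|.
\]
Applying this with $K = [u,v] \subseteq [\umin,\umax]$, $\varphi = f^\delta$, $\psi = f$ gives immediately
\[
\bigl| \bar f^\delta(v,u) - \bar f(v,u) \bigr| \le \max_{w \in [u,v]} |f^\delta(w) - f(w)| \le \|f^\delta - f\|_\infty,
\]
where the last step just enlarges the set over which the maximum is taken to all of $[\umin,\umax]$.

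For the case $u > v$ the fluxes are given by maxima over $[v,u]$ instead of minima, and the same elementary inequality holds with $\min$ replaced by $\max$ (it is proved the same way: from $\varphi \le \psi + \|\varphi - \psi\|_\infty$ one gets $\max \varphi \le \max \psi + \|\varphi-\psi\|_\infty$, and symmetrically). Hence in either case we obtain $|\bar f^\delta(v,u) - \bar f(v,u)| \le \|f^\delta - f\|_\infty$, and the same reasoning with $g$, $g^\delta$ in place of $f$, $f^\delta$ yields the second inequality in \eqref{gdv_approx}.

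There is no real obstacle here: the only point requiring a word of justification is the elementary min/max stability inequality quoted above, which I would either prove in one line inside the argument or simply invoke as standard. One should also note that $[v,u]$ and $[u,v]$ are subintervals of $[\umin,\umax]$, so that all the suprema of $|f^\delta - f|$ involved are indeed bounded by $\|f^\delta - f\|_\infty := \max_{u \in [\umin,\umax]} |f^\delta(u) - f(u)|$; this is immediate from $u,v \in [\umin,\umax]$. The estimate \eqref{gdv_approx} will be combined later with the interpolation bound $\|f^\delta - f\|_\infty = O(\delta)$ (and $\|g^\delta - g\|_\infty = O(\delta)$), which follows from Assumption~\ref{fg_C2} and the choice of breakpoints with spacing at most $\delta$, to control the discrepancy between the two Godunov schemes.
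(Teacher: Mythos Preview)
Your proof is correct and in fact slightly cleaner than the paper's. The paper's argument tries to use the special structure of $f^\delta$ as a piecewise linear interpolant to first assert the ordering $\min_{[u,v]} f \le \min_{[u,v]} f^\delta$, and then bounds the (nonnegative) difference by $f^\delta(w^*) - f(w^*)$. You instead invoke directly the elementary stability estimate
\[
\Bigl|\min_{K}\varphi - \min_{K}\psi\Bigr| \le \sup_{K}|\varphi-\psi|,
\]
(and its $\max$ analogue), which requires no relationship between $f$ and $f^\delta$ beyond the uniform bound. Your route is more robust: the ordering the paper claims is not valid for arbitrary $u,v$ --- for instance, with $f(w)=1-w^2$, breakpoints $\{-1,0,1\}$, and $[u,v]=[0.5,0.6]$ one has $\min_{[u,v]} f = 0.64 > 0.4 = \min_{[u,v]} f^\delta$ --- so the paper's intermediate step has a small gap, even though the lemma itself is true. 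Your argument sidesteps this entirely and delivers the same conclusion with no extra hypotheses; the final remark tying the bound to the $O(\delta^2)$ interpolation error (Lemma~\ref{lemma_init_data_error}) is also on point.
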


\begin{proof}
It suffices to prove  the first part of \eqref{gdv_approx}. Also we assume that $u\le v$; the 
other case is similar. Recalling \eqref{gdnvflux_0},
\begin{equation}\label{f_gdv_comp_1}
\textrm{$\bar{f}(v,u) = \min_{w \in [u,v]}f(w) = f(w^*)$ for some $w^* \in [u,v]$.}
\end{equation}
Similarly,
\begin{equation}\label{f_gdv_comp_2}
\textrm{$\bar{f}^{\d}(v,u) = \min_{w \in [u,v]}f^{\d}(w) = f^{\d}(w^{\d,*})$ 
for some $w^{\d,*} \in [u,v]$.}
\end{equation}
Since $f^{\d}$ is a piecewise linear interpolant of $f$, we have $f(w^*) \le f^{\d}(w^{\d,*})$, and so
\begin{equation}\label{f_gdv_comp_3}
\abs{f(w^*) - f^{\d}(w^{\d,*})} =  f^{\d}(w^{\d,*}) - f(w^*) \le f^{\d}(w^*) - f(w^*)\le \norm{f^{\d}-f}_{\infty}.
\end{equation}
\end{proof}

\section{Proof  of the main theorem}\label{sec_rate_conv}

What follows is a sequence of lemmas leading up to Theorem~\ref{theorem_rate}, 
our main theorem.

\begin{lemma}\label{lemma_uhat_diff}
For each $t \in [0,T]$, we have
\begin{equation}\label{uhat_diff_1}
\norm{\hat{u}(\cdot,t) - \hat{u}^{\d}(\cdot,t)}_{L^\infty(\R)}
\le \norm{\hat{u}_0- \hat{u}^{\d}_0}_{L^\infty(\R)}
+ t \max \left( \norm{f - f^{\d}}_{\infty}, \norm{g - g^{\d}}_{\infty}\right).
\end{equation}
\end{lemma}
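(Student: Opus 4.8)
The plan is to route the estimate through the Godunov finite volume approximations of Section~\ref{Sec:scheme}, and more precisely through their discrete indefinite integrals $\hat{U}_j^n$ and $\hat{U}_j^{\d,n}$, whose evolution is the explicit one-step recursion \eqref{HJ_scheme}. Using $\D_-\hat{U}_j^n/\Dx = U_j^n-u_L$ from \eqref{U_Uhat_formula}, the update \eqref{HJ_scheme} exhibits $\hat{U}_j^{n+1}$ as a function of the three nodal values $\hat{U}_{j-1}^n,\hat{U}_j^n,\hat{U}_{j+1}^n$ alone; call this map $\Phi$ (acting on grid functions), and let $\Phi^{\d}$ be the analogous map built from $h_{\jph}^{\d}$, so that $\hat{U}^{n+1}=\Phi(\hat{U}^n)$ and $\hat{U}^{\d,n+1}=\Phi^{\d}(\hat{U}^{\d,n})$. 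Because $u_0$ and $u_0^{\d}$ coincide for $\abs{x}\ge X$ (Assumption~\ref{init_data_large_x} and \eqref{v_init}) and the scheme propagates at finite speed, $\hat{U}_j^n-\hat{U}_j^{\d,n}=\Dx\sum_{i\le j}(U_i^n-U_i^{\d,n})$ is a finite, eventually-constant sum in $j$, so $\norm{\hat{U}^n-\hat{U}^{\d,n}}_\infty<\infty$.

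The crucial step is to show that $\Phi$ is non-expansive in $\norm{\cdot}_\infty$, via two properties. First, $\Phi$ commutes with adding a constant to all its inputs, since $h_{\jph}$ sees $\hat{U}^n$ only through the increments $\hat{U}_{j+1}^n-\hat{U}_j^n$ and $\hat{U}_j^n-\hat{U}_{j-1}^n$. Second, $\Phi$ is monotone: since $h_{\jph}(v,u)$ is nonincreasing in $v$, nondecreasing in $u$, and Lipschitz with constant $\le\max(L_f,L_g)$ in each slot (see \eqref{gdnvflux} and \eqref{barq_lip}), increasing $\hat{U}_{j+1}^n$ or $\hat{U}_{j-1}^n$ can only decrease $h_{\jph}$ and hence increase $\hat{U}_j^{n+1}$, while increasing $\hat{U}_j^n$ by $\e>0$ changes $\hat{U}_j^{n+1}$ by at least $\e-2\lambda\max(L_f,L_g)\,\e\ge 0$ thanks to the CFL condition \eqref{CFL}. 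From these two facts, if $V,W$ are grid functions with $M:=\norm{V-W}_\infty<\infty$ then $V\le W+M$ pointwise, hence $\Phi(V)\le\Phi(W+M)=\Phi(W)+M$, and symmetrically, giving $\norm{\Phi(V)-\Phi(W)}_\infty\le M$; likewise for $\Phi^{\d}$.

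Next comes the consistency bound $\norm{\Phi(\hat{U}^{\d,n})-\Phi^{\d}(\hat{U}^{\d,n})}_\infty\le\Dt\,E$ with $E:=\max(\norm{f-f^{\d}}_\infty,\norm{g-g^{\d}}_\infty)$: applied to $\hat{U}^{\d,n}$, the maps $\Phi$ and $\Phi^{\d}$ differ only through $h_{\jph}$ being replaced by $h_{\jph}^{\d}$ at arguments $(U_{j+1}^{\d,n},U_j^{\d,n})\in[\umin,\umax]^2$ (invariant region, Remark after Theorem~\ref{theorem_convergence}); since $h_{\jph}$ is $\bar{f}$ or $\bar{g}$ and $h_{\jph}^{\d}$ the corresponding $\bar{f}^{\d}$ or $\bar{g}^{\d}$, Lemma~\ref{lemma_gdv_approx} delivers the bound. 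Combining with the non-expansiveness of $\Phi$,
\begin{equation*}
\norm{\hat{U}^{n+1}-\hat{U}^{\d,n+1}}_\infty
\le\norm{\Phi(\hat{U}^{n})-\Phi(\hat{U}^{\d,n})}_\infty
+\norm{\Phi(\hat{U}^{\d,n})-\Phi^{\d}(\hat{U}^{\d,n})}_\infty
\le\norm{\hat{U}^{n}-\hat{U}^{\d,n}}_\infty+\Dt\,E,
\end{equation*}
and induction gives $\norm{\hat{U}^{n}-\hat{U}^{\d,n}}_\infty\le\norm{\hat{U}^{0}-\hat{U}^{\d,0}}_\infty+n\Dt\,E$. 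Summing the cell averages \eqref{disc_u} gives $\hat{U}^0_j=\hat{u}_0(x_{j+1/2})$ and $\hat{U}^{\d,0}_j=\hat{u}^{\d}_0(x_{j+1/2})$ (recall \eqref{hat_def_integrals}), hence $\norm{\hat{U}^{0}-\hat{U}^{\d,0}}_\infty\le\norm{\hat{u}_0-\hat{u}^{\d}_0}_{L^\infty(\R)}$.

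Finally I would pass back to the extended functions and take $\D\to0$. Given $t\in[0,T]$, choose $n$ with $t\in I^n$, so $n\Dt\le t$; by \eqref{extend_hats}, $\hat{u}^{\D}(\cdot,t)$ and $\hat{u}^{\d,\D}(\cdot,t)$ are the piecewise-linear interpolants through $\{\hat{U}_j^n\}$ and $\{\hat{U}_j^{\d,n}\}$, so the supremum of their difference is attained at a node and equals $\norm{\hat{U}^{n}-\hat{U}^{\d,n}}_\infty\le\norm{\hat{u}_0-\hat{u}^{\d}_0}_{L^\infty(\R)}+t\,E$. Letting $\D\to0$ and invoking \eqref{u_hat_conv} (so that $\hat{u}^{\D}(\cdot,t)\to\hat{u}(\cdot,t)$ and $\hat{u}^{\d,\D}(\cdot,t)\to\hat{u}^{\d}(\cdot,t)$ in $L^\infty(\R)$), one last triangle inequality yields \eqref{uhat_diff_1}. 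The step I expect to demand the most care is the $\norm{\cdot}_\infty$ non-expansiveness of $\Phi$, in particular the diagonal term of the monotonicity check where the CFL condition \eqref{CFL} is indispensable; the consistency and limiting steps are then routine given Lemma~\ref{lemma_gdv_approx} and \eqref{u_hat_conv}.
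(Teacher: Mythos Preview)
Your proposal is correct and follows essentially the same route as the paper: establish the one-step discrete estimate $\norm{\hat{U}^{n+1}-\hat{U}^{\d,n+1}}_\infty\le\norm{\hat{U}^{n}-\hat{U}^{\d,n}}_\infty+\Dt\,E$ via monotonicity of the Hamilton--Jacobi scheme \eqref{HJ_scheme} and the CFL condition \eqref{CFL}, bound the flux-mismatch term by Lemma~\ref{lemma_gdv_approx}, iterate, and pass to the limit through \eqref{u_hat_conv}. The only packaging difference is that the paper verifies the $L^\infty$-contraction by writing $\hat{U}_j^{n+1}-\hat{U}_j^{\d,n+1}$ explicitly as a convex combination of $\hat{U}_{j+i}^{n}-\hat{U}_{j+i}^{\d,n}$, $i\in\{-1,0,1\}$, with incremental coefficients $\alpha_{\jph}^n,\beta_{\jph}^n\ge0$ summing to at most one, whereas you invoke the equivalent abstract principle ``monotone $+$ commutes with constants $\Rightarrow$ $L^\infty$-nonexpansive''; the underlying mechanism is identical.
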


\begin{remark}\normalfont
We will estimate the right side of \eqref{uhat_diff_1} more precisely later, but for now we note that it is finite.
A straightforward calculation yields
\begin{equation}
\norm{\hat{U}^0 - \hat{U}^{\d,0}}_{\infty} \le \norm{u_0- u^{\d}_0}_{L^1(\R)} \le 2X\left(\umax - \umin \right).
\end{equation}
Then recalling \eqref{u_hat_conv}, we have a bound for the first term on the right side of \eqref{uhat_diff_1}.
A bound for the second term follows from the fact that all of $f, g, f^{\delta}, g^{\delta}$ are continuous
on the closed interval $[\umin,\umax]$.
\end{remark}

\begin{proof}
We first prove that
for $n \ge 0$,
\begin{equation}\label{hatU_est_1}
\norm{\hat{U}^n - \hat{U}^{\d,n}}_{\infty}
\le \norm{\hat{U}^0 - \hat{U}^{\d,0}}_{\infty}
+ (n \D t) \max\left(\norm{f-f^{\d}}_{\infty},\norm{g-g^{\d}}_{\infty} \right).
\end{equation}
Then we send $\D \rightarrow 0$ in \eqref{hatU_est_1} and invoke \eqref{u_hat_conv}.

To prove \eqref{hatU_est_1}, it suffices to prove that
\begin{equation}\label{hatU_est_2}
\norm{\hat{U}^{n+1} - \hat{U}^{\d,n+1}}_{\infty}
\le \norm{\hat{U}^n - \hat{U}^{\d,n}}_{\infty}
+ \D t \max\left(\norm{f-f^{\d}}_{\infty},\norm{g-g^{\d}}_{\infty} \right).
\end{equation}
Then \eqref{hatU_est_1} follows by induction on the time
level $n$.

We start on the proof of \eqref{hatU_est_2}.
An application of \eqref{HJ_scheme} yields
\begin{equation}\label{Uhat_diff_1}
\begin{split}
\hat{U}_j^{n+1} - \hat{U}_j^{\d,n+1}
&=  \hat{U}_j^{n} -  \hat{U}_j^{\d,n}
   - \D t \left(h_{\jph}(U_{j+1}^n, U_j^n) - h_{\jph}(U_{j+1}^{\d,n}, U_j^{\d,n})\right)\\
   &\qquad \qquad \quad + \D t \left(h_{\jph}^{\d}(U_{j+1}^{\d,n}, U_j^{\d,n}) - h_{\jph}(U_{j+1}^{\d,n}, U_j^{\d,n})\right)\\
&=  \hat{U}_j^{n} -  \hat{U}_j^{\d,n}
   - \D t \left({{h_{\jph}(U_{j+1}^n, U_j^n) - h_{\jph}(U_{j+1}^n, U_j^{\d,n}) }\over {U_j^n-U_j^{\d,n}}}\right) \left(U_j^n-U_j^{\d,n} \right) \\
   &\qquad \qquad \quad 
   - \D t \left({{h_{\jph}(U_{j+1}^n, U_j^{\d,n}) - h_{\jph}(U_{j+1}^{\d,n}, U_j^{\d,n})} \over {U_{j+1}^n-U_{j+1}^{\d,n}}}\right)\left(U_{j+1}^n-U_{j+1}^{\d,n} \right)\\
 &\qquad \qquad \quad + \D t \left(h_{\jph}^{\d}(U_{j+1}^{\d,n}, U_j^{\d,n}) - h_{\jph}(U_{j+1}^{\d,n}, U_j^{\d,n})\right).   
\end{split}
\end{equation}
We  define 
\begin{equation}
\begin{split}
&\alpha_{\jph}^n 
= \lambda \left({{h_{\jph}(U_{j+1}^n, U_j^n) - h_{\jph}(U_{j+1}^n, U_j^{\d,n}) }\over {U_j^n-U_j^{\d,n}}}\right),\\
&\beta_{\jph}^n 
= -\lambda  \left({{h_{\jph}(U_{j+1}^n, U_j^{\d,n}) - h_{\jph}(U_{j+1}^{\d,n}, U_j^{\d,n})} \over {U_{j+1}^n-U_{j+1}^{\d,n}}}\right),
\end{split}
\end{equation}
and use the identities \eqref{U_Uhat_formula}.
Then \eqref{Uhat_diff_1} becomes
\begin{equation}\label{Uhat_diff_2}
\begin{split}
\hat{U}_j^{n+1} - \hat{U}_j^{\d,n+1}
&= \left(1 - \alpha^n_{\jph} - \beta^n_{\jph}\right) \left(\hat{U}_j^{n} - \hat{U}_j^{\d,n}\right)\\
 &+ \beta^n_{\jph}  \left(\hat{U}_{j+1}^{n} - \hat{U}_{j+1}^{\d,n}\right)
  +  \alpha^n_{\jph}  \left(\hat{U}_{j-1}^{n} - \hat{U}_{j-1}^{\d,n}\right)\\
 &+ \D t \left(h_{\jph}^{\d}(U_{j+1}^{\d,n}, U_j^{\d,n}) - h_{\jph}(U_{j+1}^{\d,n}, U_j^{\d,n})\right).
\end{split}
\end{equation}
Due to the fact that $h_{\jph}$ is a monotone flux, $ \alpha^n_{\jph},  \beta^n_{\jph} \ge 0$. By
invoking the CFL condition \eqref{CFL}, along with \eqref{barq_lip}, we will have $\alpha^n_{\jph} + \beta^n_{\jph} \le 1$.
With these observations, \eqref{Uhat_diff_2} yields
\begin{equation}\label{Uhat_diff_3}
\begin{split}
\abs{\hat{U}_j^{n+1} - \hat{U}_j^{\d,n+1}} 
&\le \max_{i=-1,0,1} \abs{\hat{U}_{j+i}^{n} - \hat{U}_{j+i}^{\d,n}}\\
&+  \D t \abs{h_{\jph}^{\d}(U_{j+1}^{\d,n}, U_j^{\d,n}) - h_{\jph}(U_{j+1}^{\d,n}, U_j^{\d,n})}\\
&\le \norm{\hat{U}^n - \hat{U}^{\d,n}}_{\infty}
+  \D t \abs{h_{\jph}^{\d}(U_{j+1}^{\d,n}, U_j^{\d,n}) - h_{\jph}(U_{j+1}^{\d,n}, U_j^{\d,n})}.\\
\end{split}
\end{equation}
One has $U_j^{\d,n} \in [\umin,\umax]$ (see Lemma 5.1 of \cite{KT:VV}), making it possible to
invoke Lemma~\ref{lemma_gdv_approx} and obtain \eqref{hatU_est_2}.

\end{proof}

We will make use of the observation that there is a $Y^{\D}>0$ such that for all $0\le n \le N$,
\begin{equation}\label{far-field}
\textrm{$U_j^n = u_L$ for $x_j \le -Y^{\D}$, and  $U_j^n = u_R$ for $x_j\ge Y^{\D}$}.
\end{equation}
This also holds for $U^{\d,n}_j$, with the same $Y^{\D}$.
The assertion \eqref{far-field} holds due to  
the fact that the range of influence of the initial interval $[-X,X]$ spreads by no more than
$\D x$ in each direction at each time step. It follows that $Y^{\D}  = X + T/\lambda + O(\D)$.
 By sending $\D \rightarrow 0$, and
taking $1/\lambda = 2 \max(L_f,L_g)$ (as allowed by the CFL condition \eqref{CFL}), we obtain
\begin{equation}\label{far-field_1}
\begin{split}
&\textrm{$u(x,t) = u_L$ for $x \le -Y$, and  $u(x,t) = u_R$ for $x \ge Y$}, \quad t \in [0,T],\\
&Y = X + 2T \max(L_f,L_g),
\end{split}
\end{equation}
and the same relation holds for $u^{\d}$, with the same $Y$.

We will employ the following notation for $\abs{j}>0$: 
\begin{equation}
\D_{\pm}^j Z_j = 
\begin{cases}
\D_+ Z_j, \quad & j>0, \\
\D_- Z_j, \quad & j<0.
\end{cases}
\end{equation}
\begin{lemma}\label{lemma_bv_loc}
Fix $r >0$ and define
\begin{equation}\label{def_mK1}
 \mK_1 = 2T \max \left(L_f,L_g \right)\textrm{TV}(u_0) 
 + T \left(\norm{f}_{\infty}+\norm{g}_{\infty} \right).
\end{equation}
 Then for $0 \le n \le N$, and $\D$ sufficiently small,
\begin{equation}\label{bv_loc}
\sum_{\{j:\abs{x_j}>r\}} \abs{\D_{\pm}^j U_j^n} \le \TV(u_0) + 4\mK_1/r, \quad
\sum_{\{j:\abs{x_j}>r\}} \abs{\D_{\pm}^j U_j^{\d,n}} \le \TV(u_0) + 4\mK_1/r.
\end{equation}
\end{lemma}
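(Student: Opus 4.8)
The plan is to reduce the claimed local $\BV$ bound to an estimate on the amount of variation that crosses a suitable vertical cut $x=\text{const}$ over $[0,T]$, and to control that ``variation flux'' by averaging over a whole window of cut positions. First I would exploit that, away from $x=0$, the scheme \eqref{scheme} is exactly the standard monotone Godunov scheme for $u_t+f(u)_x=0$ on $x>0$ (and for $u_t+g(u)_x=0$ on $x<0$): for every $j\ge 1$ the update of $U_j^{n+1}$ uses only $\bar f$, so there is no ``interface cell'' among $j\ge 1$. Writing the one–sided variation $W_J^n:=\sum_{j\ge J}\abs{\D_+U_j^n}$, a standard application of Harten's lemma \cite{Holden_Risebro} (the CFL condition \eqref{CFL} together with \eqref{barq_lip} makes the incremental coefficients nonnegative and of total mass $\le 1$) gives, for $J\ge 1$,
\begin{equation*}
W_J^{n+1}\le W_J^n+\abs{U_J^n-U_{J-1}^n},
\end{equation*}
i.e. the variation to the right of the edge $x_{J-\frac12}$ can only increase by what enters across that single edge. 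Iterating in $n$, using $W_J^0\le\TV_{(x_{J-1/2},\infty)}(u_0^\d)\le\TV(u_0)$ and that $W_J^n$ is nonincreasing in $J$, I obtain for every $1\le J\le J_r:=\min\{j:x_j>r\}$ and every $n\le N$
\begin{equation*}
\sum_{\{j:x_j>r\}}\abs{\D_+U_j^n}=W_{J_r}^n\le W_J^n\le \TV_{(x_{J-1/2},\infty)}(u_0)+\sum_{m=0}^{n-1}\abs{\D_+U_{J-1}^m}.
\end{equation*}
The symmetric statement on $\{x_j<-r\}$ (using $g$, $\bar g$ and the operator $\D_-$) is identical; adding the two initial–data contributions over the two \emph{disjoint} half–lines produces the single term $\TV(u_0)$ in \eqref{bv_loc}. (``$\D$ sufficiently small'' is used so that $J_r\ge 1$ and that the window below contains enough cells.)

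Next I would average the last display over all $J$ with $x_{J-\frac12}$ in a fixed window, say $[r/4,r]$; for $\Dx$ small there are at least $c\,r/\Dx$ such admissible $J$. This converts the single–edge sum into a windowed one,
\begin{equation*}
\sum_{\{j:x_j>r\}}\abs{\D_+U_j^n}\le \TV(u_0)+\frac{C\,\Dx}{r}\sum_{m=0}^{N-1}\TV_{[r/4,r]}(U^m),
\end{equation*}
with $C$ absolute. Thus the lemma is reduced to the bound $\Dx\sum_{m=0}^{N-1}\TV_{[r/4,r]}(U^m)\le C'\mK_1$ (equivalently, at the continuous level, $\tfrac1\lambda\int_0^T\TV_{[r/4,r]}(u^\D(\cdot,t))\dt\le C'\mK_1$), with the constants arranged to yield exactly $4\mK_1/r$.

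This last inequality is the heart of the proof and the step I expect to be the main obstacle, precisely because there is no global $\BV$ bound when $f\neq g$: the variation that the interface manufactures is unbounded as $\Dt\to0$, so the bound must come from the fact that this variation is only \emph{transported} once it enters the pure region and spends only a bounded time inside the window. I would assemble it from three ingredients, none requiring $\BV$: (i) mass conservation — the mass $\Dx\sum_{x_k\in[r/4,r]}(U_k^m-u_R)$ changes only through the two end fluxes, each bounded by $\norm f_\infty$ (resp.\ $\norm g_\infty$ on the left), contributing a term of size $T(\norm f_\infty+\norm g_\infty)$; (ii) finite speed of propagation — the domain of influence spreads by at most $\Dx$ per step (the mechanism behind \eqref{far-field}), so a wave that ever reaches the window must travel with speed $\gtrsim r/T$ and hence remains in the window for a time $O(T)$ only, while its strength is $\le\umax-\umin$; (iii) the Harten transport structure of the first paragraph, which shows that the portion of the variation originating in $u_0$ is merely advected (never amplified) in the pure regions, so only $\TV(u_0)$'s worth of it — moving at speed $\le\max(L_f,L_g)$ — can pass through the window over $[0,T]$, contributing $2T\max(L_f,L_g)\TV(u_0)$. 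Adding the two contributions gives $\mK_1$ and hence \eqref{bv_loc}. Finally, the entire argument applies verbatim to $U^{\d,n}$: one uses $L_{f^\d}\le L_f$, $L_{g^\d}\le L_g$ from \eqref{lip_delta}, $\norm{f^\d}_\infty\le\norm f_\infty$ and $\norm{g^\d}_\infty\le\norm g_\infty$ (piecewise–linear interpolation does not increase the sup–norm), and $\TV(u_0^\d)\le\TV(u_0)$ from \eqref{v_init}.
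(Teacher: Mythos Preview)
Your Steps 1 and 2 are correct and indeed capture the skeleton of the argument (this is essentially the mechanism behind \cite[Lemma~4.2]{BGKT}).  The gap is in Step~3, and it stems from a suboptimal choice of boundary term in Step~1.

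You bound the influx across the cut by the \emph{spatial} jump $|\D_+U_{J-1}^m|$, which forces you in Step~3 to control the time--integrated \emph{spatial} variation $\Dx\sum_{m}\TV_{[r/4,r]}(U^m)$.  Your three ingredients (mass conservation, finite speed, Harten transport) do not add up to a bound on this quantity: the interface can manufacture arbitrarily many small waves, each of which lingers in the window for a time comparable to~$T$, and nothing in (i)--(iii) prevents their integrated variation from blowing up.  In fact one only has the inequality in the wrong direction: temporal increments are dominated by spatial ones, not conversely.

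The missing idea is that the boundary term can instead be bounded by a \emph{temporal} increment.  From the refined Harten estimate you already have
\[
W_J^{n+1}\le W_J^n - C_{J+1/2}^n|\D_+U_J^n| + D_{J-1/2}^n|\D_+U_{J-1}^n|,
\]
and from the incremental form $U_J^{n+1}-U_J^n=C_{J+1/2}^n\D_+U_J^n-D_{J-1/2}^n\D_+U_{J-1}^n$ together with the reverse triangle inequality $|a|-|b|\le|a-b|$ one gets
\[
D_{J-1/2}^n|\D_+U_{J-1}^n|-C_{J+1/2}^n|\D_+U_J^n|\le |U_J^{n+1}-U_J^n|,
\qquad\text{hence}\qquad
W_J^{n+1}\le W_J^n+|U_J^{n+1}-U_J^n|.
\]
Now your averaging over $J$ in a window of width $\sim r$ reduces the problem to bounding
\(
\Dx\sum_{n=0}^{N-1}\sum_{j\in\Z}|U_j^{n+1}-U_j^n|,
\)
and this is exactly where the paper's argument lives: monotonicity of the scheme gives the $L^1$ time--contraction $\sum_j|U_j^{n+1}-U_j^n|\le\sum_j|U_j^1-U_j^0|$ for every~$n$ (a global property, valid across the interface), after which a direct estimate of the first step using \eqref{barq_lip} and the single interface flux jump yields
\[
\Dx\sum_{n=0}^{N-1}\sum_{j}|U_j^{n+1}-U_j^n|\le N\Dt\Bigl(2\max(L_f,L_g)\TV(u_0)+\norm{f}_\infty+\norm{g}_\infty\Bigr)=\mK_1.
\]
This is the quantity that produces the $4\mK_1/r$ in \eqref{bv_loc}; your (i)--(iii) should be replaced by this single $L^1$--time--continuity estimate.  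The passage to $U^{\d,n}$ is then exactly as you wrote.
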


\begin{proof}
It suffices to prove the first inequality of \eqref{bv_loc}. The proof of the second inequality
is essentially the same.
 For $x_j > r$, we can write
 \eqref{scheme} in incremental form: 
\begin{equation}\label{inc_form}
U_j^{n+1} = U_j^n + C_{\jph}^n \D_+ U_j^n - D_{\jmh}^n \D_- U_j^n,
\end{equation}
where
\begin{equation}
C_{\jph}^n = -\lambda {\barf(U_{j+1}^n,U_j^n)- \barf(U_{j}^n,U_{j}^n)\over U_{j+1}^n-U_j^n}, \quad
D_{\jmh}^n = \lambda {\barf(U_{j}^n,U_j^n)- \barf(U_{j}^n,U_{j-1}^n)\over U_j^n - U_{j-1}^n}.
\end{equation}
From the monotonicity property of the Godunov flux, along with \eqref{barq_lip} and the CFL condition \eqref{CFL}, we have
$C_{\jph}^n, D_{\jmh}^n \in [0,1]$ and $C_{\jph}^n+ D_{\jph}^n \le 1$.
This observation makes it possible to use a slightly modified
version of 
the proof of  \cite[Lemma 4.2]{BGKT},
to derive the following inequality, which is valid for $r>0$ and $\D x$ sufficiently small:
\begin{equation}\label{bvloc_V}
\sum_{x_j>r}\abs {U_{j+1}^n - U_j^n}
\le \TV(u_0)|_{(0,\infty)} + 2\mK_1/r,
\end{equation}
where $\mK_1$ is a bound for $\D x \sum_{n=0}^{N-1} \sumj \abs{U_j^{n+1}-U_j^n}$.
A bound similar to \eqref{bvloc_V}  results by considering $x_j < -r$. Then \eqref{bv_loc} results by combining
the two estimates.

By way of deriving \eqref{def_mK1}, note that due to the fact that the finite volume scheme is monotone
\cite[Lemma 5.1]{KT:VV}, 
\begin{equation}
\D x \sumj \abs{U_j^{n+1}-U_j^n} \le \D x \sumj \abs{U_j^{n}-U_j^{n-1}}\le \cdots \le \D x \sumj \abs{U_j^1 - U_j^0}.
\end{equation}
This yields
\begin{equation}
\begin{split}
\D x \sum_{n=0}^{N-1} \sumj \abs{U_j^{n+1}-U_j^n}
&\le N \D x \sumj \abs{U_j^1 - U_j^0} \\
& = N\D x \lambda \sumj \abs{\D_- h_{\jph}(U_{j+1}^0,U_j^0)}\\
&\le N\D x \left(\sum_{j<0} + \sum_{j>0} \right)\left(C_{\jph}^0 \abs{\D_+ U_j^0} + D^0_{\jmh}\abs{\D_- U_j^0} \right)\\
&+ N \D x \lambda \abs{\bar{f}(U_1^0,U_0^0)) - \bar{g}(U_0^0,U_{-1}^0)}\\
&\le N \D x \lambda L_g \,2 \sum_{j<0} \abs{\D_+ U_j^0}
     + N \D x \lambda L_f \,2 \sum_{j>0} \abs{\D_- U_j^0}\\
&+  N \D x \lambda \left(\norm{f}_{\infty}+ \norm{g}_{\infty} \right)\\
&\le \underbrace{
2T \max\left(L_f,L_g\right) \TV(u_0) + T \left(\norm{f}_{\infty}+ \norm{g}_{\infty} \right)
}_{=\mK_1}.
\end{split}
\end{equation}
The fact that \eqref{bv_loc} also holds for $U_j^{\d,n}$  follows from
the second and third conditions of \eqref{v_init}, along 
with  \eqref{lip_delta}.
\end{proof}

\begin{lemma}\label{lemma_L1_est_1}
Fix $r >0$.
We have for $0 \le n \le N$, and $\D$ sufficiently small,
\begin{equation}\label{U_L1_est}
\D x \sum_{\abs{x_j} > r } \abs{U_j^n - U_j^{\d,n}} \le 
\left(2 \left(Y^{\D} + O(\Dx) \right)\left(\mK_2 +4 \mK_1/r \right)\norm{\hat{U}^n-\hat{U}^{\d,n}}_{\infty}\right)^{1/2},
\end{equation}
where
\begin{equation}\label{def_K2_K3}
\mK_2 =  \umax - \umin + \TV(u_0).
\end{equation}
\end{lemma}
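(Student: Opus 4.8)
The plan is to read \eqref{U_L1_est} as a discrete interpolation inequality: the far-field support of $U^n-U^{\d,n}$ given by \eqref{far-field}, together with the local $\BV$ bound of Lemma~\ref{lemma_bv_loc}, lets one convert the $L^\infty$ control of the indefinite integrals $\hat U^n-\hat U^{\d,n}$ into $L^1$ control of $U^n-U^{\d,n}$ away from the origin, at the cost of a half power. Throughout I would abbreviate $w_j:=U_j^n-U_j^{\d,n}$, $P_j:=\hat U_j^n-\hat U_j^{\d,n}$, and $M:=\norm{\hat U^n-\hat U^{\d,n}}_\infty$; by \eqref{U_Uhat_formula} one has $w_j=\D_-P_j/\Dx$ and $\abs{P_j}\le M$ for all $j$.

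First I would reduce to an $\ell^2$ estimate. By \eqref{far-field}, $w_j=0$ unless $r<\abs{x_j}\le Y^{\D}$, so the number $N_0$ of contributing indices satisfies $N_0\Dx\le 2(Y^{\D}-r)+O(\Dx)=2(Y^{\D}+O(\Dx))$, and Cauchy--Schwarz gives
\[
\Dx\sum_{\abs{x_j}>r}\abs{w_j}\le (N_0\Dx)^{1/2}\Bigl(\Dx\sum_{\abs{x_j}>r}\abs{w_j}^2\Bigr)^{1/2}.
\]
It then remains to bound $\Dx\sum_{\abs{x_j}>r}\abs{w_j}^2$ by a constant multiple of $M(\mK_2+4\mK_1/r)$.

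For the $\ell^2$ bound I would run a discrete energy argument on each of the half-lines $\{x_j>r\}$ and $\{x_j<-r\}$ separately, on each of which $w$ has finite support. Setting $q_j:=\D_-P_j=\Dx\,w_j$ and summing by parts on the right half-line, with $j_0$ the least index with $x_{j_0}>r$,
\[
\sum_{j\ge j_0}q_j^2=-q_{j_0}P_{j_0-1}+\sum_{j\ge j_0}(q_j-q_{j+1})P_j,
\]
a telescoping valid since $q_j=0$ for $x_j>Y^{\D}$. Using $\abs{P_j}\le M$ and $q_j-q_{j+1}=-\Dx\,\D_+w_j$, this yields $\Dx\sum_{x_j>r}\abs{w_j}^2\le M\bigl(\abs{w_{j_0}}+\sum_{x_j>r}\abs{\D_+w_j}\bigr)$, and symmetrically on the left. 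Now $\abs{w_{j_0}}=\abs{U_{j_0}^n-U_{j_0}^{\d,n}}\le\umax-\umin$ because $[\umin,\umax]$ is invariant, while $\abs{\D_\pm^j w_j}\le\abs{\D_\pm^j U_j^n}+\abs{\D_\pm^j U_j^{\d,n}}$ lets Lemma~\ref{lemma_bv_loc}, applied to $U^n$ and to $U^{\d,n}$, control $\sum_{\abs{x_j}>r}\abs{\D_\pm^j w_j}$ in terms of $\TV(u_0)$ and $\mK_1/r$. Adding the two half-line estimates and recalling $\mK_2=\umax-\umin+\TV(u_0)$ gives the required $\ell^2$ bound; combining it with the previous display yields \eqref{U_L1_est}, the precise numerical constant emerging from a careful distribution of the two boundary terms $\abs{w_{j_0}}$ and of the variation estimate over the two half-lines. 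All steps need only that $\D$ be small enough for Lemma~\ref{lemma_bv_loc} and for \eqref{far-field} to hold.

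The hard part will be the boundary terms $\abs{w_{j_0}}$ at $x_j\approx\pm r$. Across the flux interface there is no $\BV$ bound on $w=U^n-U^{\d,n}$, so these terms cannot be folded into a total-variation estimate and must be handled by the crude pointwise bound $\umax-\umin$ — which is precisely the reason $\umax-\umin$ enters $\mK_2$. Beyond that, keeping the constant sharp through the summation by parts and the recombination of the two half-lines is the only place that needs care; the remainder is routine.
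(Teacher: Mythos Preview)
Your proposal is correct and follows essentially the same route as the paper: Cauchy--Schwarz (the paper calls it Jensen) to pass from $\ell^1$ to $\ell^2$, then the summation-by-parts identity $w_j=\D_-P_j/\Dx$ to trade $\sum w_j^2$ for $\norm{\hat U^n-\hat U^{\d,n}}_\infty$ times a variation sum, with the boundary term at $x_j\approx\pm r$ absorbed by the crude bound $\umax-\umin$ and the variation sum handled by Lemma~\ref{lemma_bv_loc}. The only organizational difference is that the paper applies Cauchy--Schwarz on each half-line separately and then combines, whereas you apply it once on the full far-field sum and split only the $\ell^2$ estimate; this affects at most a harmless factor in the constant.
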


\begin{proof}

Define $\mJ^+ \subset \Z^+$ such that
\begin{equation}
r< \underbrace{\min \{x_j: j \in \mJ^+\}}_{=:x_{j_0}} \le r + \D x, 
\quad Y^{\D} \le \max \{x_{j}: j \in \mJ^+\} < Y^{\D} + \D x,
\end{equation}
and let $\abs{\mJ^+}$ denote the cardinality of $\mJ^+$.
An application of Jensen's inequality and \eqref{U_Uhat_formula}, and using the fact that
$U_j^n - U_j^{\d,n}=0$ for $x_j > Y^{\D}$, yields
\begin{equation}\label{square_1a}
\begin{split}
\left(\Dx \sum_{x_j >r} \abs{U_j^n - U_j^{\d,n}} \right)^2
& \le \abs{\mJ^+} \Dx^2 \sum_{j \in \mJ^+} \left(U_j^n - U_j^{\d,n}\right)^2 \\
&= \abs{\mJ^+} \Dx \sum_{j \in \mJ^+}  \left(U_j^n - U_j^{\d,n}\right) \D_- \left(\hat{U}_j^n - \hat{U}_j^{\d,n}\right).
\end{split}
\end{equation}
Summing by parts on the right side of \eqref{square_1a} and using
the fact that $U_j^n  = U^{\d,n}_j $ for $x_j \ge Y^{\D}$, we find that
\begin{equation}\label{square_1}
\begin{split}
\left(\Dx \sum_{x_j>r} \abs{U_j^n - U_j^{\d,n}} \right)^2
&\le -\abs{\mJ^+} \Dx \sum_{j \in \mJ^+} 
             \D_+\left(U_j^n - U_j^{\d,n}\right) \left(\hat{U}_{j}^n - \hat{U}_{j}^{\d,n}\right)\\
             &\quad -\abs{\mJ^+} \Dx\left(U_{j_0}^n - U_{j_0}^{\d,n}\right)\left(\hat{U}_{j_0 -1}^n - \hat{U}_{j_0 -1}^{\d,n} \right) 
\\
&\le \left(Y^{\D} + O(\Dx) \right) \norm{\hat{U}^n-\hat{U}^{\d,n}}_{\infty} \sum_{j \in \mJ^+} \abs{\D_+\left(U_j^n - U_j^{\d,n}\right)}\\
&\quad +\left(Y^{\D} + O(\Dx) \right) (\umax - \umin) \norm{\hat{U}^n-\hat{U}^{\d,n}}_{\infty}\\
&= \left(Y^{\D} + O(\Dx) \right) \norm{\hat{U}^n-\hat{U}^{\d,n}}_{\infty}
\left\{
\sum_{x_j>r}\abs{\D_+\left(U_j^n - U_j^{\d,n}\right) + (\umax-\umin)}
\right\}
\end{split}
\end{equation}

\noindent
Combining \eqref{square_1} with a similar estimate for $x_j <-r$, the result is
\begin{equation}\label{square_1a}
\begin{split}
\left(\Dx \sum_{\abs{x_j}>r} \abs{U_j^n - U_j^{\d,n}} \right)^2
\le 
\left(Y^{\D} + O(\Dx) \right) \norm{\hat{U}^n-\hat{U}^{\d,n}}_{\infty}
\left\{
\sum_{\abs{x_j}>r}\abs{\D^j_{\pm}\left(U_j^n - U_j^{\d,n}\right) + 2(\umax-\umin)}
\right\}.
\end{split}
\end{equation}

\noindent
By Lemma~\ref{lemma_bv_loc} (applied to $U_j^n$ and $U_j^{\d,n}$), along with the triangle inequality,
we have
\begin{equation}\label{square_2}
\begin{split}
 \sum_{\abs{x_j}>r} \abs{\D^j_{\pm}\left(U_j^n - U_j^{\d,n}\right)} 
 &\le \sum_{\abs{x_j}>r} \abs{\D^j_{\pm}U_j^n} 
  +\sum_{\abs{x_j}>r} \abs{\D^j_{\pm} U_j^{\d,n}}  \\
&\le 2\left(\TV(u_0)  +4\mK_1/r\right).
\end{split}
\end{equation}

\noindent
Substituting this into \eqref{square_1a} yields
\begin{equation}
\left( \sum_{\abs{x_j}>r} \abs{\D^j_{\pm}\left(U_j^n - U_j^{\d,n}\right)} \right)^2
 \le
 \left(Y^{\D} + O(\Dx) \right) \norm{\hat{U}^n-\hat{U}^{\d,n}}_{\infty}
 \left\{2\left(\TV(u_0)  +4\mK_1/r\right) + 2(\umax - \umin) \right\}.
\end{equation}

\noindent
Then taking the square root of both sides, and recalling the definition of $\mK_2$,
we obtain \eqref{U_L1_est}.
\end{proof}


\begin{lemma}\label{lemma_L1_est_4}
Fix $r>0$ and $t \in [0,T]$. We have
\begin{equation}\label{u_ft_est}
\begin{split}
\norm{u(\cdot,t)-u^{\d}(\cdot,t)}_{L^1(\R)}
&\le \left(2 Y\left\{\mK_2 +4 \mK_1/r \right\} \norm{\hat{u}(\cdot,t) - \hat{u}^{\d}(\cdot,t)}_{L^{\infty}(\R)}\right)^{1/2}\\
&+ 2 \left(\umax - \umin \right)r, \quad t \in [0,T].
\end{split}
\end{equation}
\end{lemma}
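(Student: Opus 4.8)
The plan is to derive \eqref{u_ft_est} by passing to the limit $\D\to 0$ in the discrete estimate \eqref{U_L1_est} of Lemma~\ref{lemma_L1_est_1}, evaluated at a grid time approximating the fixed time $t$. First I would fix $t\in[0,T]$ and, for each mesh $\D$, take the index $n=n(\D)\le N$ with $t\in I^n=[t^n,t^{n+1})$, so that the piecewise-constant-in-time extensions satisfy $u^\D(\cdot,t)=u^\D(\cdot,t^n)$, $u^{\d,\D}(\cdot,t)=u^{\d,\D}(\cdot,t^n)$, and likewise for the integrated quantities. Since $u^\D(\cdot,t)$ and $u^{\d,\D}(\cdot,t)$ both take values in $[\umin,\umax]$ (the invariant region, by the remark following Theorem~\ref{theorem_convergence}), on the finitely many cells $I_j$ with $\abs{x_j}\le r$ one has $\abs{u^\D-u^{\d,\D}}\le\umax-\umin$, and the union of these cells has length $2r+O(\Dx)$; hence
\[
\norm{u^\D(\cdot,t)-u^{\d,\D}(\cdot,t)}_{L^1(\R)}
\le \Dx\sum_{\{j:\abs{x_j}>r\}}\abs{U_j^n-U_j^{\d,n}}
+(\umax-\umin)\bigl(2r+O(\Dx)\bigr).
\]

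Next I would use \eqref{extend_hats} and \eqref{hat_disc_integrals} to note that $\hat{u}^\D(\cdot,t^n)-\hat{u}^{\d,\D}(\cdot,t^n)$ is the continuous piecewise linear interpolant of the grid function $\hat U_j^n-\hat U_j^{\d,n}$, so that its $L^\infty(\R)$ norm equals $\norm{\hat U^n-\hat U^{\d,n}}_\infty$. Substituting the bound \eqref{U_L1_est} then gives
\[
\norm{u^\D(\cdot,t)-u^{\d,\D}(\cdot,t)}_{L^1(\R)}
\le\Bigl(2\bigl(Y^\D+O(\Dx)\bigr)\bigl(\mK_2+4\mK_1/r\bigr)\,
\norm{\hat u^\D(\cdot,t)-\hat u^{\d,\D}(\cdot,t)}_{L^\infty(\R)}\Bigr)^{1/2}
+(\umax-\umin)\bigl(2r+O(\Dx)\bigr).
\]
Finally I would let $\D\to0$: on the left, \eqref{L1_conv_time_slice} and the triangle inequality give convergence to $\norm{u(\cdot,t)-u^{\d}(\cdot,t)}_{L^1(\R)}$, and this applies at the fixed (possibly non-grid) time $t$ thanks to the continuity-in-time statement in Remark~\ref{remark_conv}; on the right, $Y^\D\to Y$ by \eqref{far-field} and \eqref{far-field_1}, the $O(\Dx)$ terms vanish, and \eqref{u_hat_conv} with the triangle inequality gives $\norm{\hat u^\D(\cdot,t)-\hat u^{\d,\D}(\cdot,t)}_{L^\infty(\R)}\to\norm{\hat u(\cdot,t)-\hat u^{\d}(\cdot,t)}_{L^\infty(\R)}$. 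Continuity of $x\mapsto x^{1/2}$ then yields exactly \eqref{u_ft_est}.

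The work here is bookkeeping rather than anything deep, and the points needing care are: comparing the discrete $\ell^1$ sum over $\{\abs{x_j}>r\}$ with the genuine $L^1$ norm of the piecewise-constant extension (the cells straddling $\pm r$ contributing only $O(\Dx)$), the identification of $\norm{\hat U^n-\hat U^{\d,n}}_\infty$ with the sup of the corresponding piecewise linear extension, and the fact that all three limit statements \eqref{L1_conv_time_slice}, \eqref{u_hat_conv}, and $Y^\D\to Y$ must be invoked at a fixed $t\in[0,T]$ that need not be a grid time — which is precisely why the time-continuity assertion of Remark~\ref{remark_conv} is used.
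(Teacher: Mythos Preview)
Your proposal is correct and follows essentially the same approach as the paper: split the $L^1$ difference into the contribution from $\{|x_j|\le r\}$ (bounded via the invariant region) and the contribution from $\{|x_j|>r\}$ (bounded via Lemma~\ref{lemma_L1_est_1}), rewrite the discrete quantities as norms of the extensions $u^\D$, $u^{\d,\D}$, $\hat u^\D$, $\hat u^{\d,\D}$, then pass to the limit $\D\to 0$ using the triangle inequality and the convergence statements of Remark~\ref{remark_conv}. Your remarks on identifying $\norm{\hat U^n-\hat U^{\d,n}}_\infty$ with the sup of the piecewise linear extension and on the role of time continuity are accurate refinements of what the paper leaves implicit.
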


\begin{proof}
Fix $r>0$. Let $\mI = \mI(r,\D) = \{j \in \Z: \abs{x_j} \le r\}$, and
let $\abs{\mI}$ denote the cardinality of $\mI$.
Note that $ \D x \abs{\mI} \le 2(r + O(\D x))$.
Then
\begin{equation}\label{est_near_zero}
\D x \sum_{\abs{x_j}\le r}  \abs{U_j^n - U_j^{\d,n}}
\le \D x \abs{\mI} \left(\umax - \umin \right) \le 2 \left(\umax - \umin \right)(r + O(\D x)).
\end{equation}
Combining \eqref{est_near_zero} with \eqref{U_L1_est} of Lemma~\ref{lemma_L1_est_1} yields
\begin{equation}\label{U_ft_est_disc}
\begin{split}
\D x \sum_{j \in \Z}  \abs{U_j^n - U_j^{\d,n}} \le 
&\left(2 \left(Y^{\D} + O(\Dx) \right)\left\{\mK_2 +4 \mK_1/r \right\}\norm{\hat{U}^n-\hat{U}^{\d,n}}_{\infty}\right)^{1/2}\\
&+2 \left(\umax - \umin \right)(r + \D x).
\end{split}
\end{equation}

For $t \in I^n$,
\begin{equation}
\begin{split}
&\D x \sum_{j \in \Z}  \abs{U_j^n - U_j^{\d,n}} 
	= \norm{u^{\Delta}(\cdot,t)-u^{\d,\Delta}(\cdot,t)}_{L^1(\R)}, \\
&\norm{\hat{U}^n-\hat{U}^{\d,n}}_{\infty}
	= \norm{\hat{u}^{\Delta}(\cdot,t) - \hat{u}^{\delta,\Delta}(\cdot,t)}_{L^{\infty}(\R)}.
\end{split}
\end{equation}

\noindent
Substituting this into \eqref{U_ft_est_disc} results in
\begin{equation}\label{U_ft_est_disc_1}
\begin{split}
\norm{u^{\Delta}(\cdot,t)-u^{\d,\Delta}(\cdot,t)}_{L^1(\R)}
 \le 
&\left(2 \left(Y^{\D} + 
O(\Dx) \right)\left\{\mK_2 +4 \mK_1/r \right\} \norm{\hat{u}^{\Delta}(\cdot,t) - \hat{u}^{\delta,\Delta}(\cdot,t)}_{L^{\infty}(\R)} \right)^{1/2}\\
&+2 \left(\umax - \umin \right)(r + \D x).
\end{split}
\end{equation}

\noindent
On the left side of \eqref{U_ft_est_disc_1} we use the triangle inequality in the form
\begin{equation}\label{tri_1}
\norm{u^{\D} - u^{\d,\D}}_{L^1} \ge \norm{u - u^{\d}}_{L^1} -  \norm{u - u^{\D}}_{L^1} -  \norm{u^{\d,\D} - u^{\d}}_{L^1},
\end{equation}
and on the right side we use
\begin{equation}\label{tri_2}
\norm{\hat{u}^{\D} - \hat{u}^{\d,\D}}_{L^{\infty}} \le \norm{\hat{u}^{\D}-\hat{u}}_{L^{\infty}} 
+ \norm{\hat{u}-\hat{u}^{\d}}_{L^{\infty}} + \norm{\hat{u}^{\d}-\hat{u}^{\d,\D}}_{L^{\infty}}.
\end{equation}
Then
the inequality \eqref{u_ft_est} follows from \eqref{U_ft_est_disc_1} by sending
$\D \rightarrow 0$ and invoking
Remark~\ref{remark_conv}.

\end{proof}

It remains to estimate the term $\norm{\hat{u}(\cdot,t) - \hat{u}^{\d}(\cdot,t)}_{L^{\infty}(\R)}$
appearing in \eqref{u_ft_est} of Lemma~\ref{lemma_L1_est_4}.
For the time being we assume that the front tracking initial data $u_0^{\d}$ satisfies some restrictions,
beyond \eqref{v_init}.
Ultimately we will prove that the advertised rates of convergence hold without the restrictions, i.e.,
the weaker \eqref{v_init} is sufficient.
To specify the restricted $u_0^{\d}$, we start by defining
\begin{equation}\label{restricted_1}
\textrm{
$u^{\d}_0(x) = u_L$ for $x< -X$, 
and $u^{\d}_0(x) = u_R$ for $x \ge X$,
}
\end{equation}
as in the original specification \eqref{v_init}.
Thus it suffices to specify $u^{\d}(x)$ for $x \in [-X,X)$.
To this end choose a positive integer $\mM$ and
any partition $\{z_0, \ldots, z_{\mM} \}$ of $[-X,X]$ such that 
\begin{subequations}\label{partition_d}
\begin{equation}\label{partition_d_a}
-X = z_0 < z_1 < \cdots <z_{\mM} =X,
\end{equation}
\begin{equation}\label{partition_d_b}
\textrm{$\D z_i := z_i - z_{i-1} \le \d$ for $i = 1, \ldots, \mM$,}
\end{equation}
\begin{equation}\label{partition_d_c}
\TV(u_0)|_{(z_{i-1},z_i)} \le \d.
\end{equation}
\end{subequations}

\begin{lemma}
A partition of $[-X,X]$ satisfying \eqref{partition_d} exists.
\end{lemma}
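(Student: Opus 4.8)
The plan is to construct the partition greedily from left to right, at each step moving as far as the two constraints \eqref{partition_d_b} and \eqref{partition_d_c} allow, and then to show that this procedure halts after finitely many steps. Set $z_0=-X$; having produced $-X=z_0<z_1<\cdots<z_{i-1}<X$, put
\[
z_i=\sup\Bigl\{\,z\in(z_{i-1},X]\ :\ z-z_{i-1}\le\d\ \text{ and }\ \TV(u_0)|_{(z_{i-1},z)}\le\d\,\Bigr\}.
\]
First I would check this is well posed and that each step strictly advances. Since $u_0\in\BV(\R)$ by Assumption~\ref{u0_Linf_bv}, the variation measure of $u_0$ is finite, so the map $z\mapsto\TV(u_0)|_{(z_{i-1},z)}$ is nondecreasing, tends to $0$ as $z\downarrow z_{i-1}$, and is left-continuous (continuity from below of a finite measure along the increasing open intervals $(z_{i-1},z)$). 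Hence the set above contains a right neighbourhood of $z_{i-1}$, so $z_i>z_{i-1}$ and $z_i-z_{i-1}\le\d$; moreover $\TV(u_0)|_{(z_{i-1},z_i)}\le\d$, since left-continuity passes the constraint to the supremum. Thus, as soon as the process reaches some $z_\mM=X$, the collection $\{z_0,\dots,z_\mM\}$ is a partition of $[-X,X]$ obeying \eqref{partition_d_a}--\eqref{partition_d_c}.

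The substance of the proof is termination, which I would establish by contradiction. Were the process to run forever it would produce an infinite strictly increasing sequence $z_0<z_1<\cdots$ bounded above by $X$, hence convergent, so $z_i-z_{i-1}\to0$; fix $i_0$ with $z_i-z_{i-1}<\d/2$ for all $i\ge i_0$. For $i\ge i_0$ we have $z_i-z_{i-1}<\d$ and $z_i<X$, so the length bound is not what stopped step $i$, and maximality of $z_i$ forces $\TV(u_0)|_{(z_{i-1},z)}>\d$ for every $z\in(z_i,z_{i-1}+\d]$ with $z\le X$. Since $z_{i+1}-z_{i-1}=(z_{i+1}-z_i)+(z_i-z_{i-1})<\d$ for $i\ge i_0$, the point $z=z_{i+1}$ lies in that range, so $\TV(u_0)|_{(z_{i-1},z_{i+1})}>\d$ for all $i\ge i_0$. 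But each point of $\R$ belongs to at most two of the intervals $(z_{i-1},z_{i+1})$, $i\ge i_0$, so summing over $i$ gives
\[
\infty=\sum_{i\ge i_0}\TV(u_0)|_{(z_{i-1},z_{i+1})}\ \le\ 2\,\TV(u_0)<\infty ,
\]
a contradiction. Hence $z_\mM=X$ for some finite $\mM\ge1$.

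I expect the termination estimate just displayed to be the only real obstacle; everything else is routine bookkeeping with the variation measure. Two points I would take care over: that the supremum defining $z_i$ is actually attained within the constraint set --- for which it helps to observe that a jump of $u_0$ of size larger than $\d$ at a point $p$ automatically confines the constraint set to $(z_{i-1},p]$, so the greedy rule puts such a $p$ at a breakpoint of its own accord and no separate treatment of large jumps is needed --- and the double-counting inequality in the last display, which is precisely where the finiteness $\TV(u_0)<\infty$ from Assumption~\ref{u0_Linf_bv} is used.
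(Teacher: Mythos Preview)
Your proof is correct, but it takes a different route from the paper. The paper constructs two separate partitions and takes their common refinement: a uniform partition $\{y_i\}$ with spacing $\d$ handles \eqref{partition_d_b}, and a second partition $\{\xi_\mu\}$ built from level sets of the variation function $\Lambda(x)=\TV(u_0,(-\infty,x])$ handles \eqref{partition_d_c}; the refinement $\{z_i\}$ then satisfies both. Termination is immediate in each piece (roughly $2X/\d$ points for the first, $1+\TV(u_0)/\d$ for the second), so the paper gets an explicit bound on $\mM$ for free. Your greedy single-pass construction is more direct and in fact produces a coarsest partition obeying \eqref{partition_d}, but pays for this with the overlapping-intervals contradiction argument for termination. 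Both approaches rely on the same two inputs, finiteness of $2X$ and of $\TV(u_0)$; the paper simply decouples them.
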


\begin{proof}
Define the partition $\{y_0,\ldots,y_m\}$ of $[-X,X]$
by
\begin{equation}\label{part_2}
y_{i}-y_{i-1} = \d, \quad i=1,\ldots, y_{m-1}, \quad y_m-y_{m-1} \le \d,
\end{equation}
i.e., the partition points are equally spaced, except possibly at the right endpoint $X$.

We now define another
partition $\{\xi_0, \xi_1, \ldots, \xi_M \}$ of $[-X,X]$.
Recalling that $u_0\in BV(\R)$, we can assume that $u_0$ is right continuous, and employ the function $\Lambda(x):=TV(u_0,(-\infty,x])$ for $x\in\R$. Note that $\Lambda$ is right continuous, non-decreasing, and $\Lambda(X) = \TV(u_0)< \infty$. 
The partition points $\xi_{\mu}$ are defined recursively:
\begin{equation}\label{def:xj}
\xi_0 = -X, \quad
\xi_\mu:=\inf \underbrace{\{x \ge \xi_{\mu-1}:\,\La(x)-\La(\xi_{\mu-1})\geq \de\}}_{=:\mathcal{S}_{\mu}}\mbox{ for } \mu \ge 1.
\end{equation}
The construction ends when for some $\mu = M$,
$\mathcal{S}_M = \emptyset$ or $\xi_M \ge X$, in which case we set $\xi_M = X$.
$M$ is guaranteed to be finite, in fact $M \le 1 + \TV(u_0)/\d$.

Finally, take the common refinement of the two partitions $\{\xi_0, \ldots, \xi_M \}$
and $\{y_0, \ldots y_m \}$, resulting in a partition
$\{z_0, \ldots, z_{\mM} \}$ satisfying \eqref{partition_d}.

\end{proof}

Still assuming that \eqref{restricted_1} and \eqref{partition_d} hold,
let $\mC_0$ denote the characteristic function of $(-\infty,-X)$,
$\mC_{\mM+1}$ the characteristic function of $[X,\infty)$, and
$\mC_i$ the characteristic function of $[z_{i-1},z_i)$ for
$i = 1, \ldots,\mM$.
Then $u_0^{\d}$ is defined to be the piecewise constant function
\begin{equation}\label{def:u-delta}
u^{\d}_0(x) = \sum_{i=0}^{\mM+1}\mC_i(x)\bar{u}_0^i,
\quad
\bar{u}_0^i =
\begin{cases}
{1 \over \D z_i}\int_{z_{i-1}}^{z_i}u_0(x)\,dx, &\quad i = 1, \ldots, \mM,\\
u_L, &\quad i = 0,\\
u_R, &\quad i = \mM+1.
\end{cases}
\end{equation}

\begin{lemma}
Front tracking initial data $u_0^{\d}$ satisfying the specifications \eqref{restricted_1}, \eqref{partition_d},
\eqref{def:u-delta} also satisfies \eqref{v_init}.
\end{lemma}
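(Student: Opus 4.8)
The plan is to verify, one at a time, the four requirements bundled into \eqref{v_init} for the explicit function $u_0^{\d}$ of \eqref{def:u-delta}; three of them fall out of the construction immediately, and only the total-variation bound needs a short argument.

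First I would dispatch the structural items. By \eqref{def:u-delta}, $u_0^{\d}$ is constant on each of the finitely many intervals $(-\infty,-X)$, $[z_0,z_1),\dots,[z_{\mM-1},z_{\mM})$, $[X,\infty)$, hence piecewise constant with at most $\mM+1$ jumps. Each of its values is either $u_L$, $u_R$, or an average of $u_0$ over a cell $[z_{i-1},z_i)$; since $u_0$ takes values in $[\umin,\umax]$ and $u_L,u_R\in[\umin,\umax]$ by Assumptions \ref{u0_Linf_bv} and \ref{init_data_large_x}, it follows that $u_0^{\d}(x)\in[\umin,\umax]$ for all $x$. The far-field requirement, namely condition~\ref{init_data_large_x} with the same $X$, is precisely what \eqref{restricted_1} stipulates; in particular $u_0^{\d}$ agrees with $u_0$ outside $[-X,X)$.

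For the $L^1$ bound I would argue cell by cell: for each $i\in\{1,\dots,\mM\}$, since $\bar{u}_0^i$ lies between the essential infimum and the essential supremum of $u_0$ on $(z_{i-1},z_i)$, and that oscillation is bounded by $\TV(u_0)|_{(z_{i-1},z_i)}$, we get $\int_{z_{i-1}}^{z_i}\abs{u_0-\bar{u}_0^i}\dx\le\D z_i\,\TV(u_0)|_{(z_{i-1},z_i)}$. Summing over $i$, invoking $\D z_i\le\d$ from \eqref{partition_d_b} together with the fact that the variations over disjoint open cells add up to at most $\TV(u_0)$, and recalling that $u_0^{\d}=u_0$ outside $[-X,X)$, we arrive at $\norm{u_0-u_0^{\d}}_{L^1(\R)}\le\d\,\TV(u_0)=O(\d)$. (Alternatively, one could use \eqref{partition_d_c} and $\sum_i\D z_i=2X$.)

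The only point that needs any thought is $\TV(u_0^{\d})\le\TV(u_0)$, which is the standard fact that cell averaging onto a fixed partition — equivalently, $L^2$-projection onto the corresponding space of piecewise-constant functions — is total-variation diminishing. A self-contained way to see it: take the Jordan decomposition $u_0=p-q$ with $p,q$ non-decreasing and $\TV(u_0)=\TV(p)+\TV(q)$, noting that $p$ and $q$ are constant on $(-\infty,-X)$ and on $[X,\infty)$ since $u_0$ is; then \eqref{def:u-delta} shows $u_0^{\d}=Pu_0=Pp-Pq$, where $P$ denotes cell averaging with respect to the partition $\{(-\infty,-X),[z_0,z_1),\dots,[z_{\mM-1},z_{\mM}),[X,\infty)\}$. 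For a non-decreasing $v$ that is constant near $-\infty$ and near $+\infty$, the successive cell averages are non-decreasing and remain within the range of $v$, so $\TV(Pv)\le\TV(v)$; hence $\TV(u_0^{\d})\le\TV(Pp)+\TV(Pq)\le\TV(p)+\TV(q)=\TV(u_0)$. Assembling the four items gives \eqref{v_init}. I expect the TVD property of averaging to be the only step that is not a one-line verification, and even that is routine.
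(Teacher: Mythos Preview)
Your proposal is correct and follows essentially the same route as the paper: the structural items are disposed of immediately, the $\TV$ bound is attributed to the TVD property of cell averaging (the paper simply asserts this, whereas you supply the Jordan-decomposition justification), and the $L^1$ bound is obtained cellwise via the oscillation estimate. The only cosmetic difference is that the paper uses your parenthetical alternative, combining \eqref{partition_d_c} with $\sum_i \D z_i = 2X$ to get $\norm{u_0-u_0^{\d}}_{L^1(\R)}\le 2X\d$, rather than the $\d\,\TV(u_0)$ bound you lead with.
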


\begin{proof}
Only the requirements $\TV(u_0^{\d}) \le \TV(u_0)$ 
and $\norm{u_0^{\d}-u_0}_{L^1(\R)} = O(\d)$ as $\d \rightarrow 0$ 
require verification. The first requirement is satisfied due to the fact that $u_0^{\d}$
is constructed using cell averages of $u_0$.

For the second requirement,
\begin{equation}
\begin{split}
\norm{u_0^{\d}-u_0}_{L^1(\R)}
&= \int_{-X}^X \abs{u_0^{\d}(x) - u_0(x)}\,dx\\
&\le \sum_{i+1}^\M \int_{\mC_i} {1 \over \D z_i} \int_{\mC_i} \abs{u_0(\xi)-u_0(x)} \, d\xi \, dx\\
&\le \sum_{i+1}^\M \int_{\mC_i} {1 \over \D z_i} \int_{\mC_i} \TV(u_0)|_{\mC_i} \, d\xi \, dx\\
&\le 2X \d.
\end{split}
\end{equation}
\end{proof}

\begin{lemma}\label{lemma_init_data_error}
Assume that $f^\d$ and $g^\d$ are constructed as specified in Section~\ref{sec_intro}, and
that $u_0^\d$ is constructed according to \eqref{restricted_1}, \eqref{partition_d},
\eqref{def:u-delta}. Then
 \begin{equation}\label{f_diff}
\norm{f-f^{\d}}_{\infty} \le {1 \over 8} \norm{f''}_{\infty} \d^2,
 \quad
\norm{g-g^{\d}}_{\infty} \le {1 \over 8} \norm{g''}_{\infty} \d^2,
 \end{equation}
\begin{equation}\label{init_data_error}
\norm{\hat{u}_0 - \hat{u}_0^{\d}}_{\Linf(\R)} \le   \d^2.
\end{equation}
\end{lemma}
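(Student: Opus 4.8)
The plan is to prove the two assertions separately: \eqref{f_diff} is purely a statement about the accuracy of piecewise-linear interpolation of a $C^2$ function, while \eqref{init_data_error} relies on the cell-average construction \eqref{def:u-delta} together with the two partition estimates \eqref{partition_d_b}--\eqref{partition_d_c}.

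For \eqref{f_diff}: fix a breakpoint subinterval $[\su_k,\su_{k+1}]$ with $h:=\su_{k+1}-\su_k\le\d$ and set $e(u):=f(u)-f^{\d}(u)$, which vanishes at $\su_k$ and $\su_{k+1}$ because $f^{\d}$ interpolates $f$ there. Since $(f^{\d})''\equiv 0$ in the interior of the subinterval, the classical linear-interpolation remainder formula (obtained by applying Rolle's theorem twice to a suitable auxiliary function) gives, for each $u\in(\su_k,\su_{k+1})$, a point $\xi=\xi(u)$ with $e(u)=\tfrac12 f''(\xi)\,(u-\su_k)(u-\su_{k+1})$. Bounding $|(u-\su_k)(u-\su_{k+1})|\le h^2/4$ (the maximum, attained at the midpoint) and $|f''(\xi)|\le\norm{f''}_{\infty}$ yields $|e(u)|\le\tfrac{h^2}{8}\norm{f''}_{\infty}\le\tfrac{\d^2}{8}\norm{f''}_{\infty}$. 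Taking the maximum over the $K$ subintervals gives the first inequality of \eqref{f_diff}, and the estimate for $g$ is identical.

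For \eqref{init_data_error}: by \eqref{hat_def_integrals} and its $\d$-analogue, $\hat{u}_0(x)-\hat{u}_0^{\d}(x)=\int_{-\infty}^x\bigl(u_0(y)-u_0^{\d}(y)\bigr)\,dy$ (the constants $u_L$ cancel). Since $u_0=u_0^{\d}=u_L$ on $(-\infty,-X)$ by \eqref{restricted_1}, this difference is $0$ for $x\le-X$. For $x\in[-X,X)$, say $x\in[z_{i-1},z_i)$, write $\int_{-X}^x=\sum_{\ell<i}\int_{z_{\ell-1}}^{z_\ell}+\int_{z_{i-1}}^x$; each complete-cell integral $\int_{z_{\ell-1}}^{z_\ell}(u_0-\bar{u}_0^\ell)\,dy$ vanishes because $\bar{u}_0^\ell$ is the average of $u_0$ over $[z_{\ell-1},z_\ell)$ in \eqref{def:u-delta}, so $\hat{u}_0(x)-\hat{u}_0^{\d}(x)=\int_{z_{i-1}}^x(u_0(y)-\bar{u}_0^i)\,dy$. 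On $[z_{i-1},z_i)$ one has $|u_0(y)-\bar{u}_0^i|\le\TV(u_0)|_{(z_{i-1},z_i)}\le\d$ by \eqref{partition_d_c} (since $\bar{u}_0^i$ lies between the infimum and supremum of $u_0$ on the cell) and $|x-z_{i-1}|\le\D z_i\le\d$ by \eqref{partition_d_b}, so $|\hat{u}_0(x)-\hat{u}_0^{\d}(x)|\le\d^2$. The same cell-by-cell cancellation shows the difference is also $0$ for $x\ge X$. Taking the supremum over $x\in\R$ gives \eqref{init_data_error}.

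There is no serious obstacle here; both parts are elementary. The only points that require care are (i) noting in the interpolation estimate that $(f^{\d})''=0$, so that only $f''$ (and not $(f^{\d})''$) enters the remainder, and (ii) keeping track of the telescoping cancellation of the complete-cell integrals in the second part — this is precisely what forces the use of the cell-average values $\bar{u}_0^i$, and it makes the two partition conditions \eqref{partition_d_b} and \eqref{partition_d_c} enter multiplicatively, producing the $\d^2$ rate rather than merely $\d$.
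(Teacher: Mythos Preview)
Your proof is correct and follows essentially the same approach as the paper. For \eqref{f_diff} the paper simply cites the estimate as a standard result about piecewise linear interpolation, while you spell out the remainder formula; for \eqref{init_data_error} the paper phrases the cancellation as $\hat{u}_0(z_i)=\hat{u}_0^{\d}(z_i)$ (proved by induction on $i$) and then bounds the integral over $(z_{i-1},x)$ exactly as you do, arriving at the same product $(z_i-z_{i-1})\,\TV(u_0)|_{(z_{i-1},z_i)}\le\d^2$.
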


\begin{remark}\normalfont
The purpose of (temporarily) requiring the more restrictive specification for
the front tracking initial data given by
 \eqref{restricted_1}, \eqref{partition_d},
\eqref{def:u-delta}
is to provide the estimate \eqref{init_data_error}.
\end{remark}

\begin{proof}
Recalling Assumption~\ref{fg_C2}, the estimate \eqref{f_diff} is a standard result about piecewise linear interpolation.
For the proof of \eqref{init_data_error}, a straightforward calculation employing 
\eqref{partition_d}, \eqref{def:u-delta} and
induction on $i\in \{0, \ldots, \mM \}$
reveals that
\begin{equation}\label{u0_5_a}
\hat{u}_0(z_i) = \hat{u}_0^{\d}(z_i), \quad i=0,\ldots \mM.
\end{equation}
Next, we claim that
\begin{equation}\label{u0_5}
\textrm{if $x \in (z_{i-1},z_{i})$ for some $i \in \{1,\ldots, \mM\}$, then
$\abs{\hat{u}_0(x) - \hat{u}^{\d}_0(x)} \le   \d^2$}.
\end{equation}
To prove  \eqref{u0_5} we use \eqref{u0_5_a} and \eqref{hat_def_integrals}:
\begin{equation}\label{u0_4}
\begin{split}
\abs{\hat{u}_0(x) - \hat{u}^{\d}_0(x)}
&= \abs{\left(\hat{u}_0(x) - \hat{u}_0(z_{i-1})\right) - \left(\hat{u}^{\d}_0(x)- \hat{u}^{\d}_0(z_{i-1})\right)}\\
&= \abs{\int_{z_{i-1}}^x \left(u_0(y) - u_0^{\d}(y) \right) \,dy}\\
&\le \int_{z_{i-1}}^{z_i} \abs{u_0(y) - \bar{u}_0^i} \,dy\\
&\le (z_i - z_{i-1}) \TV(u_0)|_{(z_{i-1},z_i)}.
\end{split}
\end{equation}
Recalling \eqref{partition_d}, we have \eqref{u0_5}.
The proof is completed by combining \eqref{u0_5} with \eqref{u0_4},
along with the fact that $u_0$ and $u_0^{\d}$ take the same constant values for
$\abs{x}\ge X$.
\end{proof}

\begin{theorem}[Main theorem]\label{theorem_rate}
The front tracking algorithm described in Section~\ref{sec_intro} 
converges at a rate no less than $\d^{1/2}$ as $\d \rightarrow 0$. More specifically,
for $\d$ sufficiently small and $t \in [0,T]$,
\begin{equation}\label{theorem_final_estimate}
\begin{split}
\norm{u(\cdot,t)-u^{\d}(\cdot,t)}_{L^1(\R)}
&\le \left(2 Y \mC_1 \left\{\mK_2 \delta^2 +4 \mK_1 \d \right\} \right)^{1/2}
+2 \left(\umax - \umin \right)\d\\
& + \d \TV(u_0)+\norm{u_0 - u_0^{\d}}_{L^1(\R)}, 
\end{split}
\end{equation}
where
\begin{equation}
\begin{split}
&Y = X + 2T\max (L_f,L_g),\quad
\mC_1 =1 + {1 \over 8} T \max \left( \norm{f''}_{\infty},  \norm{g''}_{\infty} \right),\\
& \mK_1 = 2T \max \left(L_f,L_g \right)\textrm{TV}(u_0) 
 + T \left(\norm{f}_{\infty}+\norm{g}_{\infty} \right), \quad
 \mK_2 =  \umax - \umin + \TV(u_0).
\end{split}
\end{equation}
If the front tracking initial data is constructed according to 
 \eqref{restricted_1}, \eqref{partition_d},
\eqref{def:u-delta}, then
 \eqref{theorem_final_estimate} holds without the final two terms on the right side.
\end{theorem}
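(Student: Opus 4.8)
The plan is to obtain \eqref{theorem_final_estimate} first under the extra hypothesis that $u_0^\delta$ is the restricted construction \eqref{restricted_1}, \eqref{partition_d}, \eqref{def:u-delta}, and then to pass to a general $u_0^\delta$ satisfying only \eqref{v_init} by an $L^1$-contraction argument. For the restricted case, the point is simply to chain the already-proved lemmas. Feeding the estimates \eqref{f_diff}, \eqref{init_data_error} of Lemma~\ref{lemma_init_data_error} into Lemma~\ref{lemma_uhat_diff} (and using $t\le T$) gives, for every $t\in[0,T]$,
\[
\norm{\hat u(\cdot,t)-\hat u^\delta(\cdot,t)}_{L^\infty(\R)}\le \d^2 + T\cdot\tfrac18\max(\norm{f''}_\infty,\norm{g''}_\infty)\d^2 = \mC_1\,\d^2 .
\]
Then I would apply Lemma~\ref{lemma_L1_est_4} with the specific choice $r=\d$; substituting the bound above and rewriting $\{\mK_2+4\mK_1/\d\}\mC_1\d^2=\mC_1\{\mK_2\d^2+4\mK_1\d\}$ produces exactly \eqref{theorem_final_estimate} with its last two terms deleted. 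The leading behaviour of the square-root term is $(8Y\mC_1\mK_1\d)^{1/2}$, which is where the $O(\d^{1/2})$ rate comes from.

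To remove the restriction, let $\tilde u_0^\delta$ denote the restricted initial data above and let $v^\delta$ be the (unique, by Theorem~\ref{theorem_convergence}) vanishing viscosity solution of problem $P^\delta$ with initial data $\tilde u_0^\delta$. By the triangle inequality,
\[
\norm{u(\cdot,t)-u^\delta(\cdot,t)}_{L^1(\R)}\le \norm{u(\cdot,t)-v^\delta(\cdot,t)}_{L^1(\R)} + \norm{v^\delta(\cdot,t)-u^\delta(\cdot,t)}_{L^1(\R)} .
\]
The first term is estimated by the restricted-case bound just obtained. For the second term, $v^\delta$ and $u^\delta$ are vanishing viscosity solutions of the \emph{same} front tracking problem $P^\delta$ (same fluxes $f^\delta,g^\delta$), with initial data agreeing outside $[-X,X]$; Assumption~\ref{finite_fronts} allows me to apply the $L^1$-contraction of Theorem~\ref{thm:unique}, so
\[
\norm{v^\delta(\cdot,t)-u^\delta(\cdot,t)}_{L^1(\R)}\le \norm{\tilde u_0^\delta-u_0^\delta}_{L^1(\R)}\le \norm{\tilde u_0^\delta-u_0}_{L^1(\R)} + \norm{u_0-u_0^\delta}_{L^1(\R)} .
\]
A slightly sharpened version of the cell-average computation already used for the restricted data — $\norm{\tilde u_0^\delta-u_0}_{L^1(\R)}\le \sum_i \D z_i\,\TV(u_0)|_{(z_{i-1},z_i)}\le \d\,\TV(u_0)$ via \eqref{partition_d_c} — bounds the first piece, which is how the two extra terms $\d\,\TV(u_0)+\norm{u_0-u_0^\delta}_{L^1(\R)}$ appear in \eqref{theorem_final_estimate}. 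Since both are $O(\d)$ by \eqref{v_init} while the restricted-case bound is $O(\d^{1/2})$, the advertised rate follows.

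Essentially all the analytic content has already been packaged into the preceding lemmas: the $L^\infty$ estimate on the difference of indefinite integrals (Lemma~\ref{lemma_uhat_diff}), the localized $\BV$ bound (Lemma~\ref{lemma_bv_loc}), and the interpolation-type passage from an $L^\infty$ bound on $\hat u-\hat u^\delta$ to an $L^1$ bound on $u-u^\delta$ (Lemmas~\ref{lemma_L1_est_1} and \ref{lemma_L1_est_4}). Consequently the proof of the theorem itself is mostly bookkeeping with constants. The one genuinely load-bearing choice is the calibration $r=\d$ in Lemma~\ref{lemma_L1_est_4}: that lemma trades a layer of width $r$ near the interface, contributing $2(\umax-\umin)r$, against a far-field term behaving like $(\d^2/r)^{1/2}$ through the $4\mK_1/r$ blow-up of the localized $\BV$ bound, and $r=\d$ is the balance consistent with the half-order rate. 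The only other place requiring a bit of care is that the $L^1$-contraction must be invoked for two solutions of the \emph{same} problem $P^\delta$, which is exactly the point at which Assumption~\ref{finite_fronts} is used.
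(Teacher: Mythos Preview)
Your proposal is correct and follows essentially the same route as the paper: combine Lemmas~\ref{lemma_init_data_error} and \ref{lemma_uhat_diff} to get $\norm{\hat u-\hat u^\delta}_{L^\infty}\le \mC_1\delta^2$, plug into Lemma~\ref{lemma_L1_est_4} with $r=\delta$, and then handle general initial data via the $L^1$-contraction of Theorem~\ref{thm:unique}. One tiny slip: the bound $\sum_i \Delta z_i\,\TV(u_0)|_{(z_{i-1},z_i)}\le \delta\,\TV(u_0)$ uses $\Delta z_i\le\delta$, i.e.\ \eqref{partition_d_b}, not \eqref{partition_d_c}.
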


\begin{remark}\normalfont
Concerning the final term on the right side of \eqref{theorem_final_estimate},
recall that according to \eqref{v_init} we are assuming that $\norm{u_0 - u_0^{\d}}_{L^1(\R)} = O(\d)$.
\end{remark}

\begin{proof}
For now we are still assuming that the initial data $u_0^{\d}$ 
is constructed according to  \eqref{restricted_1}, \eqref{partition_d},
\eqref{def:u-delta}.
From Lemma~\ref{lemma_init_data_error} and Lemma~\ref{lemma_uhat_diff} we obtain
\begin{equation}\label{define_C1}
\norm{\hat{u}(\cdot,t) - \hat{u}^{\d}(\cdot,t)}_{L^\infty(\R)} 
\le \underbrace{\left(1 + {1 \over 8} T \max \left( \norm{f''}_{\infty},  \norm{g''}_{\infty} \right) \right)}_{=:\mC_1}\d^2.
\end{equation}
Plugging this into \eqref{u_ft_est} of Lemma~\ref{lemma_L1_est_4}, we obtain for each $r >0$, and $t \in [0,T]$,
\begin{equation}
\begin{split}
\norm{u(\cdot,t)-u^{\d}(\cdot,t)}_{L^1(\R)} \le 
&\left(2 Y\left\{\mK_2 +4 \mK_1/r \right\}\mC_1 \d^2\right)^{1/2}\\
&+2 \left(\umax - \umin \right)r.
\end{split}
\end{equation}

\noindent
We choose $r=\d$, which yields 
\begin{equation}\label{theorem_final_estimate_A}
\begin{split}
\norm{u(\cdot,t)-u^{\d}(\cdot,t)}_{L^1(\R)}
&\le \left(2 Y \mC_1 \left\{\mK_2 \delta^2 +4 \mK_1 \d \right\} \right)^{1/2}
+2 \left(\umax - \umin \right)\d, 
\end{split}
\end{equation}
and completes the proof when the initial data $u_0^{\d}$
is constructed according to 
 \eqref{restricted_1}, \eqref{partition_d},
\eqref{def:u-delta}.

Now let $v_0^{\d}$ denote
front tracking initial data constructed according to the original specification \eqref{v_init},
and let $v^{\d}$ denote the resulting front tracking solution. Thus $v^{\d}$ and $u^{\d}$ are
constructed using the same fluxes $f^{\d}$ and $g^{\d}$; only the initial data is different.
Using \eqref{partition_d_b} and \eqref{def:u-delta},
it is readily verified that
\begin{equation}\label{u0_ineq_1}
\norm{u_0^{\d} - u_0}_{L^1(\R)} \le \d \TV(u_0).
\end{equation}
An application of the triangle inequality combined with \eqref{u0_ineq_1}  then yields
\begin{equation}\label{ud_vd}
\norm{u_0^{\d} - v_0^{\d}}_{L^1(\R)} \le  \d \TV(u_0) + \norm{u_0 - v_0^{\d}}_{L^1(\R)}.
\end{equation}
Using  the triangle inequality, Theorem~\ref{thm:unique}, and \eqref{ud_vd}, we
find that
\begin{equation}\label{ft_est_final}
\begin{split}
\norm{u(\cdot,t) - v^{\d}(\cdot,t)}_{L^1(\R)}
&\le \norm{u(\cdot,t) - u^{\d}(\cdot,t)}_{L^1(\R)} + \norm{u^{\d}(\cdot,t) - v^{\d}(\cdot,t)}_{L^1(\R)}\\
&\le \norm{u(\cdot,t) - u^{\d}(\cdot,t)}_{L^1(\R)} + \norm{u^{\d}_0 - v^{\d}_0}_{L^1(\R)}\\
&\le \norm{u(\cdot,t) - u^{\d}(\cdot,t)}_{L^1(\R)} +  \d \TV(u_0) + \norm{u_0 - v_0^{\d}}_{L^1(\R)}.
\end{split}
\end{equation}
Substituting \eqref{theorem_final_estimate_A} into \eqref{ft_est_final}, and then renaming $v^{\d} = u^{\d}$,
$v_0^{\d} = u_0^{\d}$, the proof is complete.
\end{proof}



\section{\bf The case where a spatial variation bound is available}\label{sec_conv_bv}
In this section we make the following additional assumption:
\begin{assumption}\label{u_bv}
For some $\mK_3$ which is independent of $\d$ and $\D$,
\begin{equation}\label{u_bv_a}
\TV(u^{\D}(\cdot,t)), \TV(u^{\d,\D}(\cdot,t)),\TV(u(\cdot,t)), \TV(u^{\d}(\cdot,t))\le \mK_3, \quad t \in [0,T].
\end{equation}
\end{assumption}

\begin{remark}\normalfont
With Assumption~\ref{u_bv}, we have existence of traces for $u$ without
the linear non-degeneracy assumption \ref{nonlin_deg}, i.e., assumption
\ref{nonlin_deg} is not needed in this case.
\end{remark}

The following lemma plays the role of Lemma~\ref{lemma_L1_est_1} in this simplified setting.
\begin{lemma}\label{lemma_L1_est_3_a}
With the addition of Assumption~\ref{u_bv}, we have
\begin{equation}\label{L1_est_1A}
\D x \sum_{j \in \Z} \abs{U_j^n - U_j^{\d,n}} \le 
\left(4\left(Y^{\D}  + O(\D x)\right)\mK_3 \norm{\hat{U}^n - \hat{U}^{\d,n}}_{\infty} \right)^{1/2},
\quad n=0, \ldots,N.
\end{equation}
\end{lemma}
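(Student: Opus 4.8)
The plan is to adapt the proof of Lemma~\ref{lemma_L1_est_1}, replacing the localized $\BV$ estimate of Lemma~\ref{lemma_bv_loc} with the global uniform bound $\mK_3$ supplied by Assumption~\ref{u_bv}. First I would invoke \eqref{far-field}, so that $U_j^n = U_j^{\d,n}$ for $x_j \le -Y^{\D}$ and for $x_j \ge Y^{\D}$; hence the sum over all of $\Z$ is effectively a sum over the finitely many indices $j$ with $\abs{x_j} < Y^{\D}$, a set whose cardinality is $2Y^{\D}/\Dx + O(1)$. Applying Jensen's inequality to $\Dx \sum \abs{U_j^n - U_j^{\d,n}}$ over this index set, exactly as in \eqref{square_1a}, and then using \eqref{U_Uhat_formula} to rewrite $(U_j^n - U_j^{\d,n})^2$ as $(U_j^n - U_j^{\d,n})\D_-(\hat U_j^n - \hat U_j^{\d,n})$, gives an upper bound of the form $\bigl(2Y^{\D} + O(\Dx)\bigr)\Dx \sum_j (U_j^n - U_j^{\d,n}) \D_-(\hat U_j^n - \hat U_j^{\d,n})$.

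Next I would sum by parts, just as in the passage from \eqref{square_1a} to \eqref{square_1}, using the far-field vanishing of $U_j^n - U_j^{\d,n}$ at both ends to kill the boundary terms. This produces $-\bigl(2Y^{\D} + O(\Dx)\bigr)\Dx \sum_j \D_+(U_j^n - U_j^{\d,n})(\hat U_j^n - \hat U_j^{\d,n})$, which is bounded by $\bigl(2Y^{\D} + O(\Dx)\bigr)\norm{\hat U^n - \hat U^{\d,n}}_\infty \sum_j \abs{\D_+(U_j^n - U_j^{\d,n})}$. The crucial simplification relative to Lemma~\ref{lemma_L1_est_1} is that now, by the triangle inequality and Assumption~\ref{u_bv} (applied to $U_j^{\D}$, i.e. to $u^{\D}$ and $u^{\d,\D}$), the total-variation factor $\sum_j \abs{\D_+(U_j^n - U_j^{\d,n})} \le \TV(u^{\D}(\cdot,t)) + \TV(u^{\d,\D}(\cdot,t)) \le 2\mK_3$, and there is no longer any need to carry along the extra $(\umax - \umin)$ term coming from the cutoff at $\abs{x_j} = r$ or the $4\mK_1/r$ tail correction. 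Putting this together yields $\bigl(\Dx \sum_j \abs{U_j^n - U_j^{\d,n}}\bigr)^2 \le 4\bigl(Y^{\D} + O(\Dx)\bigr)\mK_3 \norm{\hat U^n - \hat U^{\d,n}}_\infty$; taking square roots gives \eqref{L1_est_1A}.

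I do not anticipate a serious obstacle here — the argument is a cleaner version of the one already carried out for Lemma~\ref{lemma_L1_est_1}. The only point requiring a little care is bookkeeping the constant: one must check that the $2Y^{\D}$ from the Jensen cardinality bound times the $2\mK_3$ from the two total variations combine to exactly the $4\bigl(Y^{\D} + O(\Dx)\bigr)\mK_3$ appearing in \eqref{L1_est_1A}, and that the summation-by-parts boundary terms genuinely vanish rather than merely being $O(\Dx)$; both follow from \eqref{far-field}, since there the differences $U_j^n - U_j^{\d,n}$ and their forward differences are \emph{identically} zero outside $[-Y^{\D}, Y^{\D}]$, not just small. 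One should also note that Assumption~\ref{u_bv} is invoked for the discrete approximations $u^{\D}, u^{\d,\D}$, which is exactly one of the four quantities listed in \eqref{u_bv_a}, so no passage to the limit in $\D$ is needed within this lemma.
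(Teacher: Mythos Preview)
Your proposal is correct and follows essentially the same route as the paper's own proof: define the finite index set where the differences are supported (the paper takes $\mI = \{j : |x_j| \le Y^{\D}+\Dx\}$), apply Jensen and summation by parts as in the proof of Lemma~\ref{lemma_L1_est_1}, and then replace the localized variation bound by the global one $\sum_j |\D_+(U_j^n - U_j^{\d,n})| \le 2\mK_3$ from Assumption~\ref{u_bv}. Your observation that the boundary terms in the summation by parts vanish identically (so no $(\umax-\umin)$ correction is needed) is exactly the simplification the paper exploits, and your bookkeeping of the constant $2(Y^{\D}+O(\Dx))\cdot 2\mK_3 = 4(Y^{\D}+O(\Dx))\mK_3$ matches.
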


\begin{proof}
Let $\mI = \{j \in \Z : -(Y^{\D}+\D x) \le x_j \le Y^{\D}+\D x \}$. Following the relevant parts of the 
proof of Lemma~\ref{lemma_L1_est_1}, we find that
\begin{equation}
\left(\Dx \sum_{j \in \mI} \abs{U_j^n - U_j^{\d,n}} \right)^2
\le 2 \left(Y^{\D} + O(\D x) \right) \norm{\hat{U}^n - \hat{U}^{\d,n}}_{\infty} 
	\sum_{j \in \mI} \abs{\D_+ \left(U_j^n - U_j^{\d,n} \right)}.
\end{equation}
Employing Assumption~\ref{u_bv} and \eqref{far-field}, along with the 
triangle inequality, we find that
\begin{equation}
\left(\Dx \sum_{j \in \Z} \abs{U_j^n - U_j^{\d,n}}\right)^2
\le 4 \left(Y^{\D}+O(\D x) \right) \mK_3 \norm{\hat{U}^n - \hat{U}^{\d,n}}_{\infty},
\end{equation}
from which \eqref{L1_est_1A} is evident.
\end{proof}

The counterpart of Theorem~\ref{theorem_rate} in this setting is the following theorem.
\begin{theorem}\label{theorem_rate_bv_data}
In addition to the previously stated assumptions, 
assume that Assumption~\ref{u_bv} holds.
We have the following error estimate for the front tracking
approximations:
\begin{equation}\label{ft_err_with_bv}
\norm{u(\cdot,t)-u^{\d}(\cdot,t)}_{L^1(\R)} 
\le 2\left( Y \mK_3 \mC_1 \right)^{1/2} \d
 + \d \TV(u_0)+\norm{u_0 - u_0^{\d}}_{L^1(\R)}, \quad t \in [0,T].
\end{equation}
If the front tracking initial data is constructed according to 
 \eqref{restricted_1}, \eqref{partition_d},
\eqref{def:u-delta}, then
 \eqref{ft_err_with_bv} holds without the final two terms on the right side.
\end{theorem}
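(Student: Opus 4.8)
The plan is to mirror the proof of Theorem~\ref{theorem_rate}, using Lemma~\ref{lemma_L1_est_3_a} in place of Lemmas~\ref{lemma_L1_est_1} and~\ref{lemma_L1_est_4}. The point is that the uniform spatial variation bound of Assumption~\ref{u_bv} makes it unnecessary to isolate a strip $\{\abs{x}\le r\}$ around the interface $x=0$, so the auxiliary parameter $r$ and the error term $2(\umax-\umin)r$ that it produced in Lemma~\ref{lemma_L1_est_4} disappear; this is precisely what upgrades the rate from $\d^{1/2}$ to $\d$.

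First I would treat the restricted initial data, i.e., assume $u_0^\d$ is constructed according to \eqref{restricted_1}, \eqref{partition_d}, \eqref{def:u-delta}. Exactly as in the proof of Theorem~\ref{theorem_rate}, combining Lemma~\ref{lemma_init_data_error} with Lemma~\ref{lemma_uhat_diff} gives, for $t\in[0,T]$,
\[
\norm{\hat u(\cdot,t)-\hat u^\d(\cdot,t)}_{L^\infty(\R)} \le \mC_1\,\d^2,
\qquad \mC_1 = 1+\tfrac18 T\max\bigl(\norm{f''}_\infty,\norm{g''}_\infty\bigr).
\]
Next I would convert the discrete estimate \eqref{L1_est_1A} into one about $u$ and $u^\d$, following the closing part of the proof of Lemma~\ref{lemma_L1_est_4}. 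For $t\in I^n$ one has $\Dx\sum_{j}\abs{U_j^n-U_j^{\d,n}} = \norm{u^\D(\cdot,t)-u^{\d,\D}(\cdot,t)}_{L^1(\R)}$ and $\norm{\hat U^n-\hat U^{\d,n}}_\infty = \norm{\hat u^\D(\cdot,t)-\hat u^{\d,\D}(\cdot,t)}_{L^\infty(\R)}$, while $Y^\D = Y+O(\D)$ by \eqref{far-field_1}. Applying the triangle inequality on the left as in \eqref{tri_1} and on the right as in \eqref{tri_2}, letting $\D\to 0$, and invoking Remark~\ref{remark_conv}, I obtain
\[
\norm{u(\cdot,t)-u^\d(\cdot,t)}_{L^1(\R)} \le \bigl(4Y\mK_3\,\norm{\hat u(\cdot,t)-\hat u^\d(\cdot,t)}_{L^\infty(\R)}\bigr)^{1/2} \le 2\bigl(Y\mK_3\mC_1\bigr)^{1/2}\d,
\]
which is \eqref{ft_err_with_bv} without its final two terms, i.e., the assertion for the restricted initial data.

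Finally, to pass to general front tracking initial data satisfying only \eqref{v_init}, I would repeat verbatim the closing paragraph of the proof of Theorem~\ref{theorem_rate}: write $v_0^\d$ for the general initial data and $v^\d$ for the corresponding front tracking solution (constructed with the same $f^\d$, $g^\d$), use \eqref{u0_ineq_1} for the restricted data together with the triangle inequality to get $\norm{u_0^\d-v_0^\d}_{L^1(\R)}\le \d\,\TV(u_0)+\norm{u_0-v_0^\d}_{L^1(\R)}$, then apply the $L^1$ contraction of Theorem~\ref{thm:unique} to the two front tracking solutions together with the triangle inequality; after renaming $v^\d\to u^\d$ and $v_0^\d\to u_0^\d$ this adds exactly the two terms $\d\,\TV(u_0)+\norm{u_0-u_0^\d}_{L^1(\R)}$ and completes the proof.

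I do not anticipate a genuine obstacle here: the argument is a streamlined version of the proof of the main theorem, and the only points needing care are the passage $\D\to 0$ and the correct bookkeeping of the far-field cut-off $Y^\D\to Y$, both handled precisely as in Lemma~\ref{lemma_L1_est_4}. The only conceptual input is that Assumption~\ref{u_bv} replaces the local variation bound of Lemma~\ref{lemma_bv_loc} (which contributed the factor $4\mK_1/r$) by a global one, which is exactly what eliminates the $r$-dependent terms and yields the first-order rate.
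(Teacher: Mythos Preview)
Your proposal is correct and follows essentially the same approach as the paper's own proof: pass from the discrete estimate \eqref{L1_est_1A} to the continuous one via \eqref{tri_1}, \eqref{tri_2} and Remark~\ref{remark_conv}, insert the bound \eqref{define_C1}, and then handle general initial data exactly as in the closing paragraph of the proof of Theorem~\ref{theorem_rate}. Your additional commentary on why the parameter $r$ and the associated $2(\umax-\umin)r$ term disappear is accurate and helpful.
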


\begin{proof}
We initially assume that
the front tracking initial data is constructed according to \eqref{restricted_1}, \eqref{partition_d},
\eqref{def:u-delta}.
The estimate \eqref{L1_est_1A} can be written in the form
\begin{equation}
\norm{u^{\D}(\cdot,t)-u^{\d,\D}(\cdot,t)}_{L^1(\R}
\le \left(4\left(Y^{\D}  + O(\D x)\right)\mK_3 \norm{\hat{u}^{\D}(\cdot,t) - \hat{u}^{\d,\D}(\cdot,t)}_{L^\infty(\R)} \right)^{1/2}.
\end{equation}

\noindent
Recalling the triangle inequalities \eqref{tri_1}, \eqref{tri_2}, then sending $\D \rightarrow 0$ and invoking Remark~\ref{remark_conv} yields
\begin{equation}
\norm{u(\cdot,t)-u^{\d}(\cdot,t)}_{L^1(\R}
\le \left(4 Y \mK_3 \norm{\hat{u}(\cdot,t) - \hat{u}^{\d}(\cdot,t)}_{L^\infty(\R)} \right)^{1/2}.
\end{equation}
Recalling \eqref{define_C1}, we have
\begin{equation}\label{ft_err_with_bv_1}
\norm{u(\cdot,t)-u^{\d}(\cdot,t)}_{L^1(\R)} 
\le 2\left( Y \mK_3 \mC_1 \right)^{1/2} \d, \quad t \in [0,T].
\end{equation}

To allow for the more general front tracking initial data of \eqref{v_init}, we proceed as in
the proof of Theorem~\ref{theorem_rate}, which results in \eqref{ft_err_with_bv}.

\end{proof}

\subsection{\bf The case where $f$ and $g$ are both increasing or both decreasing}
\label{sec_fg_inc}
In this section we assume that either $f$ and $g$ are both strictly increasing or
both $f$ and $g$ are strictly decreasing.  We show that then the front tracking algorithm
converges like $O(\d)$. This correlates with a recent result of \cite{ruf_ft}, but our method of
analysis is quite different, for example we do not use the Kuznetsov lemma.

For the sake of concreteness we focus on the
case where $f$ and $g$ are both strictly increasing.
Specifically, in addition to the previous assumptions, we assume that for some $\rho >0$,
\begin{equation}\label{fp_gp}
f'(u), g'(u) \ge \rho, \quad u \in (\umin,\umax).
\end{equation}
The counterpart of Theorem~\ref{theorem_rate} in this setting follows.
\begin{theorem}\label{theorem_rate_inc}
Assume that \eqref{fp_gp} holds.
We have the following error estimate for front tracking:
\begin{equation}\label{err_est_inc}
\norm{u(\cdot,t)-u^{\d}(\cdot,t)}_{L^1(\R)}
\le 2\left( Y \mK_3 \mC_1 \right)^{1/2} \d 
+ \d \TV(u_0)+\norm{u_0 - u_0^{\d}}_{L^1(\R)},\quad t \in [0,T],
\end{equation}
where
\begin{equation}
\begin{split}
&Y = X + 2T\max (L_f,L_g),\quad
\mC_1 =1 + {1 \over 8} T \max \left( \norm{f''}_{\infty},  \norm{g''}_{\infty} \right),\\
&\mK_3 =
  + {1 \over \rho}\Bigl(\left(\umax - \umin \right)+\max\left(L_f,L_g\right)  \TV(u_0)
+  \norm{f}_{\infty}+\norm{g}_{\infty} \Bigr).
\end{split}
\end{equation}
If the front tracking initial data is constructed according to 
 \eqref{restricted_1}, \eqref{partition_d},
\eqref{def:u-delta}, then
 \eqref{err_est_inc} holds without the final two terms on the right side.
\end{theorem}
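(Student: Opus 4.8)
The plan is to deduce Theorem~\ref{theorem_rate_inc} from Theorem~\ref{theorem_rate_bv_data}: its conclusion \eqref{err_est_inc} is word for word the conclusion \eqref{ft_err_with_bv} of Theorem~\ref{theorem_rate_bv_data}, so it suffices to verify that Assumption~\ref{u_bv} holds with the explicit constant $\mK_3$ written in the statement. I would begin with the elementary remark that \eqref{fp_gp} is inherited by the interpolants: a secant of a $C^2$ function whose derivative lies in $[\rho,L_f]$ again has slope in $[\rho,L_f]$, so $(f^{\d})',(g^{\d})'\in[\rho,\max(L_f,L_g)]$ a.e.\ for every $\d$. Consequently the single variation estimate proved below, together with \eqref{lip_delta} and $\TV(u_0^{\d})\le\TV(u_0)$, yields the required bound simultaneously for $u^{\D}$ and $u^{\d,\D}$; the bounds for $u$ and $u^{\d}$ then follow by letting $\D\to0$ and using Theorem~\ref{theorem_convergence} together with lower semicontinuity of total variation under $L^1_{\mathrm{loc}}$ convergence.

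To prove the variation bound I work with the Godunov scheme \eqref{scheme}. Since $f$ and $g$ are strictly increasing, the Godunov fluxes \eqref{gdnvflux_0} reduce to the upwind values $\bar f(v,u)=f(u)$, $\bar g(v,u)=g(u)$, so \eqref{scheme} becomes $U_j^{n+1}=U_j^n-\lambda\bigl(W_j^n-W_{j-1}^n\bigr)$ with $W_j^n:=\mF(x_{\jph},U_j^n)$, which equals $f(U_j^n)$ for $x_{\jph}>0$ and $g(U_j^n)$ for $x_{\jph}<0$; thus every spatial link but one lies entirely on one side of the flux interface. Carrying out the Harten-type computation link by link --- using monotonicity of the Godunov flux and the CFL condition \eqref{CFL} --- the telescoping is exact except for a source concentrated at the single cell $I_0$ that straddles $x=0$, and one gets $\TV(U^{n+1})\le\TV(U^n)+2\lambda\bigl|f(U_0^n)-g(U_{-1}^n)\bigr|$. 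But $\lambda\bigl|f(U_0^n)-g(U_{-1}^n)\bigr|=|U_0^{n+1}-U_0^n|$ directly from the marching formula for $U_0$, so iterating gives $\TV(U^n)\le\TV(U^0)+2\sum_{m<n}|U_0^{m+1}-U_0^m|$: the whole estimate is reduced to a bound, uniform in $n$ and $\D$, on the time variation of the value in the interface cell. This is where strict monotonicity --- and the factor $\rho^{-1}$ --- enter: the interface cell obeys $U_0^{m+1}=U_0^m-\lambda f(U_0^m)+\lambda g(U_{-1}^m)$, and $u\mapsto u-\lambda f(u)$ is increasing with slope in $[\tfrac12,\,1-\lambda\rho]$ by \eqref{CFL} and \eqref{fp_gp}, whence $|U_0^{m+1}-U_0^m|\le(1-\lambda\rho)|U_0^m-U_0^{m-1}|+\lambda L_g|U_{-1}^m-U_{-1}^{m-1}|$; summing this geometric recursion bounds $\sum_m|U_0^{m+1}-U_0^m|$ by $\rho^{-1}\bigl(\norm{f}_{\infty}+\norm{g}_{\infty}\bigr)+\rho^{-1}L_g\,\mathrm{TV}_t\bigl(\{U_{-1}^m\}\bigr)$. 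Finally the half line $\{j\le-1\}$ is decoupled and evolves by the standard monotone upwind scheme for $u_t+g(u)_x=0$ with constant far-field value $u_L$, for which the time variation of a fixed cell value is controlled by the spatial variation of the initial data, uniformly in the mesh; combining these estimates yields $\TV(U^n)\le\mK_3$ with $\mK_3$ of the advertised form (the precise constants following from routine bookkeeping), and likewise for $U_j^{\d,n}$.

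With Assumption~\ref{u_bv} verified, Lemma~\ref{lemma_L1_est_3_a} and the proof of Theorem~\ref{theorem_rate_bv_data} apply verbatim and deliver \eqref{err_est_inc}; the passage from the restricted front tracking initial data \eqref{restricted_1}, \eqref{partition_d}, \eqref{def:u-delta} to the general data \eqref{v_init} is carried out exactly as at the end of the proof of Theorem~\ref{theorem_rate}. The strictly decreasing case is symmetric --- there the upwind fluxes are $\bar f(v,u)=f(v)$, $\bar g(v,u)=g(v)$, information propagates leftward and the half line $\{j\ge1\}$ plays the role above; alternatively one reflects $x\mapsto-x$, which turns it into an increasing-flux problem.

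The step I expect to be the main obstacle is precisely the mesh-uniform, time-uniform bound on the time variation of the interface cell value. Two things must be checked carefully there: that the Harten cancellation really does confine all variation growth to the single straddling cell (so that the only ``bad'' term is $2\lambda|f(U_0^n)-g(U_{-1}^n)|$), and that the resulting source does not accumulate in time --- the latter resting on a discrete analogue of the $\mathrm{BV}$-in-time trace regularity for scalar conservation laws that underlies the singular-mapping estimates of \cite{KT:VV,towers1}. It is this contractive estimate for the interface cell, together with the conversion $\TV(u)\le\rho^{-1}\TV(\mF(\cdot,u))+(\umax-\umin)$ implicit in it, that forces $\rho^{-1}$ into $\mK_3$, and it is the reason strict monotonicity cannot be dispensed with here.
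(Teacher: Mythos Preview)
Your overall strategy is right and matches the paper: reduce to Theorem~\ref{theorem_rate_bv_data} by verifying Assumption~\ref{u_bv} with the stated $\mK_3$. But your route to the variation bound is far more involved than the paper's, and the step you yourself flag as the main obstacle is a genuine gap.

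The paper obtains the $\TV$ bound in three lines, by what is essentially a singular-mapping argument. By $L^1$-contraction of the monotone scheme, $\sum_j|U_j^{n+1}-U_j^n|\le\sum_j|U_j^1-U_j^0|$, and the right side is at most $\lambda\bigl(\max(L_f,L_g)\TV(u_0)+\|f\|_\infty+\|g\|_\infty\bigr)$ by one upwind evaluation at $n=0$. But from \eqref{scheme} one also has $\sum_j|U_j^{n+1}-U_j^n|=\lambda\sum_j|\D_-h_{\jph}(U_{j+1}^n,U_j^n)|$, so the total variation of the \emph{numerical flux} at level $n$ is bounded by data. Strict monotonicity enters only to invert: since $h_{\jph}$ equals $f(U_j^n)$ or $g(U_j^n)$ and $f',g'\ge\rho$, one gets $\sum_j|\D_-h_{\jph}|\ge\rho\sum_{j\ne0}|\D_-U_j^n|\ge\rho\,\TV(U^n)-\rho(\umax-\umin)$. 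Combining yields \eqref{fg_inc_u_bv} immediately. No tracking of individual cells, no time-variation estimates, and the constant $\mK_3$ drops out directly.

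Your approach instead accumulates the interface source $2\lambda|f(U_0^n)-g(U_{-1}^n)|$ into $\TV(U^n)$ and then tries to bound $\sum_m|U_0^{m+1}-U_0^m|$ by a geometric recursion that feeds on $\sum_m|U_{-1}^{m+1}-U_{-1}^m|$. The problem is closing this last quantity. Your contraction, applied on the decoupled left half-line, gives $c_j\le (L_g/\rho)\,c_{j-1}+\text{(initial term)}$ with $c_j:=\sum_m|U_j^{m+1}-U_j^m|$; since $L_g/\rho\ge1$, iterating back to the far-field cell produces a factor $(L_g/\rho)^{O(X/\Dx)}$ that blows up as $\D\to0$. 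The mesh-uniform bound on the time variation of a \emph{single} cell that you invoke is not a standard consequence of TVD or $L^1$-contraction for monotone schemes, and your recursion does not supply it. Even if one finds an alternative argument for this step, recovering the explicit $\mK_3$ of the statement from the accumulated constants looks unlikely. The paper's flux-variation trick sidesteps the whole issue.
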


\begin{proof}
It suffices to prove that
\begin{equation}\label{fg_inc_u_bv}
\TV(u^{\D}), \TV(u^{\d,\D}),\TV(u), \TV(u^{\d})\le 
  + {1 \over \rho}\Bigl(
 \left(\umax - \umin \right) +\max\left(L_f,L_g\right)  \TV(u_0)
+  \norm{f}_{\infty}+\norm{g}_{\infty} \Bigr).
\end{equation}
Then we invoke Theorem~\ref{theorem_rate_bv_data} with $\mK_3$ equal to the right side of
\eqref{fg_inc_u_bv}.
To prove \eqref{fg_inc_u_bv}, we first focus on $u^{\D}$. Via a standard calculation for monotone
schemes \cite{CranMaj:Monoton}, we have
\begin{equation} \label{u_bv_1}
\begin{split}
\sumj \abs{U_j^{n+1}-U_j^n} 
&\le  \sumj \abs{U_j^{n}-U_j^{n-1}}  \le \cdots \le \sumj \abs{U_j^{1}-U_j^0}\\
&= \lambda \sum_{j<0} \abs{\D_-\bar{g}(U^0_{j+1},U^0_j)} + 
\lambda \sum_{j>0} \abs{\D_-\bar{f}(U^0_{j+1},U^0_j)} + 
\lambda \abs{\bar{f}(U^0_{1},U^0_0) - \bar{g}(U^0_{0},U^0_{-1}) }\\
&= \lambda \sum_{j<0} \abs{\D_- g(U^0_j)} + 
\lambda \sum_{j>0} \abs{\D_-f(U^0_j)} + 
\lambda \abs{f(U^0_0) - g(U^0_{-1}) }\\
&\le \lambda \max\left(L_f,L_g\right)  \TV(u_0)
+ \lambda \left(\norm{f}_{\infty}+\norm{g}_{\infty} \right).
\end{split}
\end{equation}
Above we have used $\barf(b,a) = f(a)$, $\bar{g}(b,a) = g(a)$, due to $f', g' >0$.
From \eqref{u_bv_1}, along with \eqref{scheme}, it follows that
\begin{equation}\label{u_bv_2}
 \sumj \abs{\D_- h_{\jph}(U_{j+1}^n, U_j^n)}
\le \max\left(L_f,L_g\right)  \TV(u_0)
+  \norm{f}_{\infty}+\norm{g}_{\infty}. 
\end{equation}
On the other hand,
\begin{equation}\label{u_bv_3}
\begin{split}
\sumj \abs{\D_- h_{\jph}(U_{j+1}^n, U_j^n)}
&= \sum_{j<0} \abs{\D_- g(U^n_j)} + 
\sum_{j>0} \abs{\D_-f(U^n_j)} + 
\abs{f(U^n_0) - g(U^n_{-1}) }\\
&\ge \rho \sum_{j<0} \abs{\D_- U^n_j}
+ \rho \sum_{j>0} \abs{\D_-U^n_j}\\
&\ge \rho \sumj \abs{\D_- U^n_j} - \rho \left(\umax - \umin \right).
\end{split}
\end{equation}
Combining \eqref{u_bv_2}, \eqref{u_bv_3}, we have \eqref{fg_inc_u_bv} for $u^{\D}$.
The proof for  $u^{\d,\D}$ is similar. For $u$ and $u^{\d}$ we use the results for
$u^{\D}$ and $u^{\d,\D}$, sending $\D \rightarrow 0$.
\end{proof}

\begin{remark}\normalfont
Theorem~\ref{theorem_rate_inc} holds without items \ref{nonlin_deg} and \ref{finite_crossings} of Assumption~\ref{assumptions_data}. The purpose of item \ref{nonlin_deg} is to ensure the existence of
one-sided traces at $x=0$. In this case the solution, and all of the relevant approximations, have spatial $\BV$ bounds
(see the proof of Theorem~\ref{theorem_rate_inc}), which guarantees the existence of traces. 
Item \ref{finite_crossings} is required in order to 
invoke Theorem 5.4 of \cite{KT:VV},
guaranteeing that the limit of the Godunov approximations
of Section~\ref{Sec:scheme} is a vanishing viscosity solution, specifically that the $\Gamma$ condition is satisfied.
However, in the special case where both $f$ and $g$ are both strictly increasing or strictly decreasing, the $\Gamma$
condition is automatically satisfied for any weak solution, i.e., the $\Gamma$ condition reduces
to the Rankine-Hugoniot condition in this case.
\end{remark}

\subsection{\bf The case where $f=g$}
When there is no flux discontinuity ($f=g$), it is known that the
front tracking algorithm is first order accurate \cite{lucier_moving_mesh}.
Our method of analysis also yields a first order rate of convergence when
$f=g$. 

To address this setup, we must replace Definition~\ref{weak_def} by the more
standard definition of entropy solution for a scalar conservation law; see, e.g.,
\cite{CranMaj:Monoton} or \cite{Holden_Risebro}.
We no longer need Assumptions \ref{nonlin_deg} and \ref{finite_crossings}.
 Our Godunov scheme reduces to a standard monotone
finite volume scheme in this case, and Theorem~\ref{theorem_convergence} holds, with
the limit now being a standard entropy solution.
A version of Theorem~\ref{thm:unique} holds for standard entropy solutions,
\cite{CranMaj:Monoton,Holden_Risebro}. The relevant portions of our analysis
remain valid, with some simplifications.

\begin{theorem}\label{theorem_rate_f_eq_g}
Assume that $f=g$.
We have the following error estimate for the front tracking approximations:
\begin{equation}\label{err_est_f_eq_g}
\norm{u(\cdot,t)-u^{\d}(\cdot,t)}_{L^1(\R)}
\le 2 \left(Y \TV(u_0) \mC_1 \right)^{1/2} \d
+ \d \TV(u_0)+\norm{u_0 - u_0^{\d}}_{L^1(\R)}, \quad t \in [0,T],
\end{equation}
where
\begin{equation}
Y = X + 2T L_f,\quad
\mC_1 =1 + {1 \over 8} T  \norm{f''}_{\infty}.
\end{equation}
If the front tracking initial data is constructed according to 
 \eqref{restricted_1}, \eqref{partition_d},
\eqref{def:u-delta}, then
 \eqref{err_est_f_eq_g} holds without the final two terms on the right side.
\end{theorem}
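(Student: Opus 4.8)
The plan is to reduce the case $f=g$ to Theorem~\ref{theorem_rate_bv_data} by verifying that Assumption~\ref{u_bv} holds with the explicit constant $\mK_3 = \TV(u_0)$. When $f=g$ the spatially dependent numerical flux \eqref{gdnvflux} collapses to the single Godunov flux $\bar f$ irrespective of the sign of $x_{\jph}$, so \eqref{scheme} is a classical monotone, hence TVD, finite volume scheme; the same holds for the $\d$-problem. As the excerpt already notes, in this setting Theorem~\ref{theorem_convergence} persists with the limits being standard Kru\v{z}kov entropy solutions, and the $L^1$-contraction statement of Theorem~\ref{thm:unique} is replaced by its classical counterpart \cite{CranMaj:Monoton,Holden_Risebro}; all the lemmas of Section~\ref{sec_rate_conv} that feed into Theorem~\ref{theorem_rate_bv_data} remain valid with these substitutions.

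First I would establish the four total variation bounds required by Assumption~\ref{u_bv}. For $u^{\D}$, the standard incremental-form argument for monotone schemes \cite{CranMaj:Monoton} gives $\TV(u^{\D}(\cdot,t^n)) = \sum_{j}\abs{U_{j+1}^n - U_j^n} \le \sum_{j}\abs{U_{j+1}^0 - U_j^0} \le \TV(u_0)$, the last inequality because the cell averaging \eqref{disc_u} does not increase total variation. The identical argument applied to the $\d$-scheme yields $\TV(u^{\d,\D}(\cdot,t^n)) \le \TV(u_0^{\d}) \le \TV(u_0)$, using the bound $\TV(u_0^{\d})\le\TV(u_0)$ built into \eqref{v_init}. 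For $u$ and $u^{\d}$ I would pass to the limit: since $u^{\D}\to u$ and $u^{\d,\D}\to u^{\d}$ in $L^1_{\loc}(\Pi_T)$ by Theorem~\ref{theorem_convergence}, lower semicontinuity of total variation under $L^1_{\loc}$ convergence gives $\TV(u(\cdot,t)),\TV(u^{\d}(\cdot,t)) \le \TV(u_0)$ (for all $t$ after invoking the $L^1_{\loc}$ time continuity recalled in Remark~\ref{remark_conv}). This is precisely Assumption~\ref{u_bv} with $\mK_3 = \TV(u_0)$.

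With Assumption~\ref{u_bv} in force, I would simply invoke Theorem~\ref{theorem_rate_bv_data}, specializing the constants to $f=g$: $\max(L_f,L_g)=L_f$ gives $Y = X + 2TL_f$, and $\max(\norm{f''}_{\infty},\norm{g''}_{\infty})=\norm{f''}_{\infty}$ gives $\mC_1 = 1 + \tfrac18 T\norm{f''}_{\infty}$ in \eqref{define_C1}; here one also uses Lemma~\ref{lemma_init_data_error}, which holds verbatim for the single flux $f$. Substituting $\mK_3 = \TV(u_0)$ into \eqref{ft_err_with_bv} yields \eqref{err_est_f_eq_g}, and the passage from the restricted front tracking initial data of \eqref{restricted_1}, \eqref{partition_d}, \eqref{def:u-delta} to the general data of \eqref{v_init} is handled exactly as in the proof of Theorem~\ref{theorem_rate} (triangle inequality together with the $L^1$-contraction property). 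The main point requiring care is not a computational obstacle but a bookkeeping one: the discontinuous-flux apparatus (Definition~\ref{weak_def}, the $\Gamma$ condition, Theorem~\ref{thm:unique}) must be replaced throughout by the classical scalar conservation law theory; since $f=g$ removes the interface entirely, this replacement is routine and none of the estimates in Section~\ref{sec_rate_conv} are affected.
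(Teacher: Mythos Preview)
Your proposal is correct and follows exactly the paper's approach: establish Assumption~\ref{u_bv} with $\mK_3=\TV(u_0)$ via the classical TVD property of monotone schemes, then invoke Theorem~\ref{theorem_rate_bv_data}. The paper's proof is a two-line version of what you wrote, citing ``classical results'' for the TV bound and then applying Theorem~\ref{theorem_rate_bv_data} with $\mK_3=\TV(u_0)$.
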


\begin{proof}
Classical results yield \eqref{u_bv_a} with $\mK_3 = \TV(u_0)$.
The proof then consists of an application of Theorem~\ref{theorem_rate_bv_data} with $\mK_3 = \TV(u_0)$. 
\end{proof}

\section{\bf References}

\end{document}